\definecolor{webgreen}{rgb}{0,.5,0}
\definecolor{webbrown}{rgb}{.6,0,0}
\theoremstyle{plain}
\newtheorem{theorem}{Theorem}
\newtheorem{corollary}[theorem]{Corollary}
\newtheorem{lemma}[theorem]{Lemma}
\theoremstyle{definition}
\newtheorem{definition}[theorem]{Definition}
\newtheorem{example}[theorem]{Example}
\theoremstyle{remark}
\newtheorem{remark}[theorem]{Remark}
\def\zz{\mathbb Z}
\def\nn{\mathbb N}
\newcommand{\beq}{\begin{equation}}
\newcommand{\eeq}{\end{equation}}
\newcommand{\seqnum}[1]{\href{https://oeis.org/#1}{\rm \underline{#1}}}
\begin{document}

\begin{center}
\vskip 1cm{\Large\bf 
Fixed Points and Cycles of the Kaprekar \\
\vskip .01in
Transformation: 2. Even bases}
\vskip 1cm
Anthony Kay\\
 \href{mailto: anthony.kay292@btinternet.com}{\tt  anthony.kay292@btinternet.com}\\
\ \\
 Katrina Downes-Ward
\end{center}

\vskip .2in

\begin{abstract}

We develop a classification of the fixed points and cycles of the Kaprekar transformation in even bases. The most numerous fixed points and cycles are those we denote \emph{symmetric} and \emph{almost-symmetric}; the structure of the cycles of these classes in base $b$ is determined by subgroups and cosets in the multiplicative group modulo $b-1$. We provide methods and formulae for enumerating the fixed points and cycles of these and other classes. A detailed survey of the fixed points and cycles is provided for bases $4$, $6$ and $8$, including a rigorous proof that the classification is complete in base $4$.

\end{abstract}

\section{Introduction}

A comprehensive introduction to the \emph{Kaprekar transformation} $T_{b,n}$ is given in our earlier paper \cite{kdw1}; here we just provide the briefest details of the transformation and our notation.

Given a positive integer $x$ which has $n$ digits when written in base $b$, we rearrange the digits in descending and ascending orders, and subtract the latter from the former. So, given $a_0,a_1,\ldots,a_{n-1}\in\zz$ with
$$0\le a_0\le a_1\le\cdots\le a_{n-1}<b,$$
where the $a_j(j=0,1,\ldots,n-1)$ are the digits of $x$ in ascending order, we have
\beq\label{defkt}
T_{b,n}(x):=\sum_{j=0}^{n-1}a_jb^j-\sum_{j=0}^{n-1}a_jb^{n-1-j};
\eeq
we refer to the integer $T_{b,n}(x)$ as the \emph{successor} of $x$. Iterating this procedure, we can define the $m$'th successor recursively by $T_{b,n}^{m+1}(x)=T_{b,n}(T_{b,n}^m(x))$. We must eventually reach a fixed point or a cycle, since there are finitely many $n$-digit integers in base $b$, for any choice of $b$ and $n$. The \emph{length} of a cycle arising from some given $x$ is the minimal positive integer $l$ such that $T_{b,n}^{m+l}(x)=T_{b,n}^m(x)$ for all $m\ge m_0$, for some $m_0\in\nn$; a fixed point may be regarded as a cycle of length $l=1$, but in this paper we use the word ``cycle'' strictly to denote cycles of length $l\ge2$. Our concern in this paper is with the fixed points and cycles that exist in even bases $b\ge4$; base $2$ was comprehensively analysed by Yamagami \cite{yama17}, and odd bases, which display very different behaviours from even bases, were the subject of our earlier paper \cite{kdw1}.

The result of applying the Kaprekar transformation to an integer $x$ depends only on its \emph{Kaprekar index}, defined as the $b$-tuple $\mathbf{k}=(k_0,k_1,\ldots,k_{b-1})$ where the \emph{component} $k_i$ is the number of occurrences of the digit $i$ in the set $\{a_0,a_1,\ldots,a_{n-1}\}$. So we shall often display only Kaprekar indices, rather than the integers that they represent. We write $x\mapsto T_{b,n}(x)$ and $\mathbf{k}\mapsto\mathbf{k'}$ to indicate the succession of integers and of Kaprekar indices, respectively; so if $\mathbf{k}$ is the Kaprekar index of $x$, then $\mathbf{k'}$, with components $(k_0',k_1',\ldots,k_{b-1}')$, is the Kaprekar index of $T_{b,n}(x)$. We also use multiple primes to denote further successors, for example, $\mathbf{k}''$ is the Kaprekar index of $T_{b,n}^2(x)$. A formula for a component of $\mathbf{k'}$ in terms of the components of $\mathbf{k}$, under given conditions on the latter, will be called a \emph{succession formula}. A notation convention adopted throughout this paper is that $i$ (and occasionally $i'$) refers to the digits $0,1,\ldots,b-1$ which may appear in an integer expressed in base $b$, while $j$ refers to the position of a digit in a string representing an integer in some base, as used in equation (\ref{defkt}) above.

If an integer $x$ is a \emph{repdigit}, with every digit the same in the base under consideration, then $T_{b,n}(x)=0$. We therefore exclude repdigits from consideration; no non-repdigit can be transformed to a repdigit in any even base. If every non-repdigit with some digit-count $n$ in some base $b$ reaches the same fixed point or cycle following sufficient iterations of the Kaprekar transformation, that fixed point or cycle is called \emph{unanimous}.

The remainder of this paper is set out as follows.  In Section \ref{sec2} we describe classes of fixed point and cycle that exist in even bases $b\ge4$, and general principles for enumerating the fixed points and cycles in each class. The next three sections contain lists of fixed points and cycles that are known to exist in bases $4$, $6$, and $8$, respectively, together with their enumerations. For base $4$ we prove that our list of fixed points and cycles is complete; the proof is by exhaustion, and it is not feasible to provide similar proofs for higher bases, although it is reasonable to conjecture that our lists for bases $6$ and $8$ are complete.  We draw some conclusions in Section \ref{sec6}.

\section{Fixed points and cycles in general even bases $b\ge4$}\label{sec2}

\subsection{General properties of the transformation}

To keep the notation tidy, we introduce the symbol
$$B:=b-1$$
for the greatest digit that appears in base $b$. We also use a variant of the difference notation introduced by Prichett, Ludington and Lapenta \cite{pll}:
\beq\label{diffdef}
d_j:=a_{n-1-j}-a_j\quad\mbox{for}\;\;j=0,1,\ldots,\nu,
\eeq
where $\nu=\lfloor n/2\rfloor-1$, and we observe that $d_0\ge d_1\ge\cdots\ge d_\nu$. We set $\mu=\max \{j:d_j>0\}$, so either $d_{\mu+1}=0$ or $\mu=\nu$.
Performing the subtraction (\ref{defkt}), the digits of $T_{b,n}(x)$ are then found to be \cite{pll}
\beq\label{ktres}
d_0,d_1,\ldots, d_{\mu-1},d_\mu-1,B,\ldots, B,B-d_\mu,B-d_{\mu-1},\ldots ,B-d_1,b-d_0,
\eeq
where commas have been inserted between digits for clarity since some digits are in the form of expressions involving a subtraction. We may immediately note one useful property of the Kaprekar transformation:
\begin{lemma}\label{divb}
(a) The sum of digits of $T_{b,n}(x)$ is divisible by $B$. 

(b) In terms of Kaprekar index components, $\sum_{i=1}^{B-1}ik_i'$ is divisible by $B$.
\end{lemma}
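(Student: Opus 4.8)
The plan is to reduce both parts to the elementary ``casting out nines'' fact that, in base $b$, an integer is congruent modulo $B=b-1$ to the sum of its base-$b$ digits, which holds because $b^j\equiv 1\pmod B$ for every $j\ge 0$. Granting this, part (a) follows by showing $T_{b,n}(x)\equiv 0\pmod B$ straight from the definition (\ref{defkt}): reducing that expression modulo $B$, every power $b^j$ and $b^{n-1-j}$ becomes $1$, so $T_{b,n}(x)\equiv\sum_{j=0}^{n-1}a_j-\sum_{j=0}^{n-1}a_j=0\pmod B$. Since the digit sum of $T_{b,n}(x)$ is congruent to $T_{b,n}(x)$ modulo $B$, it too is divisible by $B$, which is part (a).

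An alternative route to (a), which avoids invoking the congruence fact, is to sum the digits listed in (\ref{ktres}) directly. Pairing $d_j$ with $B-d_j$ for $1\le j\le\mu-1$, pairing $d_\mu-1$ with $B-d_\mu$, pairing $d_0$ with $b-d_0$, and accounting for the block of interior digits equal to $B$, the sum telescopes to $(n-\mu-1)B$, which is manifestly a multiple of $B$. A little care is needed with the boundary cases $\mu=0$ and $\mu=\nu$ (where the block of interior $B$'s can be empty, or reduce to the single central digit when $n$ is odd), but the value $(n-\mu-1)B$ holds uniformly; I would present the modular argument as the main proof and mention this computation only as a remark.

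For part (b) I would observe that the sum of digits of $T_{b,n}(x)$, expressed through its Kaprekar index $\mathbf{k}'$, is exactly $\sum_{i=0}^{B}ik_i'=\sum_{i=1}^{B-1}ik_i'+Bk_B'$ (the $i=0$ term vanishing). By part (a) the left-hand side is divisible by $B$, and $Bk_B'$ is trivially divisible by $B$, so their difference $\sum_{i=1}^{B-1}ik_i'$ is divisible by $B$, as claimed. There is no real obstacle here: the only things to watch are the bookkeeping of digit positions in (\ref{ktres}) if one takes the direct route, and the harmless separation of the top digit $i=B$ in part (b); the result is simply base-$b$ casting out nines applied to the structurally symmetric output string (\ref{ktres}) of the transformation.
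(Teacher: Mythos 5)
Your proof is correct, and part (b) is word-for-word the paper's argument (separate off $0\cdot k_0'$ and $B\cdot k_B'$ from $\sum_{i=0}^B ik_i'$). For part (a), however, you lead with a genuinely different and arguably cleaner argument than the paper's. The paper simply asserts that (a) ``may be checked directly from (\ref{ktres})'' --- i.e., by summing the output digit string --- whereas your primary route reduces everything to $b^j\equiv1\pmod B$ applied to the defining subtraction (\ref{defkt}), which gives $T_{b,n}(x)\equiv0\pmod B$ before any digits of the result are even computed, and then transfers this to the digit sum by casting out nines. This buys you independence from the precise bookkeeping of (\ref{ktres}) (the exceptional digits $d_\mu-1$ and $b-d_0$, the possibly empty central block) and in fact proves the slightly stronger statement that the integer $T_{b,n}(x)$ itself is a multiple of $B$. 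Your ``alternative route'' is exactly the computation the paper gestures at but does not write out, and your closed form $(n-\mu-1)B$ for the digit sum is correct (the regular pairs contribute $(\mu-1)B$, the two exceptional pairs contribute $(B-1)+(B+1)=2B$, and the central block contributes $(n-2\mu-2)B$); it checks out in the boundary cases you mention and even gives more information than divisibility alone. Either route is acceptable; presenting the modular argument as primary is a reasonable improvement in robustness over the paper's one-line appeal to (\ref{ktres}).
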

\begin{proof}
(a) This result may be checked directly from (\ref{ktres}). 

(b) From the definition of the Kaprekar index, the sum of digits of $T_{b,n}(x)$ is $\sum_{i=0}^B ik_i'$, divisible by $B$ from the result (a). The terms $0k_0'$ and $Bk_B'$ are obviously divisible by $B$, leaving the remainder of the sum divisible by $B$.
\end{proof}
Several important results on the Kaprekar index of a successor may be deduced from the digit string (\ref{ktres}): 
\begin{theorem}\label{noex}
(a) If an integer $x$ satisfies the condition $d_\mu=b-d_0$, then the Kaprekar index $\mathbf{k}'$ of its successor $T_{b,n}(x)$ has $k_B'\ge k_0'$ and $k_i'=k_{B-i}'$ for $i=1,2,\ldots,B-1$.

(b) Even when $d_\mu\ne b-d_0$, the components of $\mathbf{k}'$ satisfy 
$$|k_i'-k_{B-i}'|\le3\text{ for }i=1,2,\ldots,B-1.$$ 
The only possibility for $|k_i'-k_{B-i}'|=3$ is with $i=b/2$ or $i=b/2-1$, and requires either $d_0=b/2$ or $d_\mu=b/2$.

(c) The only case in which $k_B'< k_0'$ is when $\mu=\nu=n/2-1$, with $d_\mu=1$ and $d_0<B$; in this case, $k_B'=0$ and $k_0'=1$.

(d) $k_0'\le k_0$ except possibly if $k_0=0$, in which case $k_0'\le1$.
\end{theorem}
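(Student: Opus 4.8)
The plan is to read all four parts off the digit string (\ref{ktres}) of $T_{b,n}(x)$, taking $x$ to be a non-repdigit so that $d_0\ge1$ and $\mu$ is well defined. The organizing observation is that the digits of $T_{b,n}(x)$ come in \emph{reflected pairs}: the pairs $(d_j,\,B-d_j)$ for $j=1,\ldots,\mu-1$ each sum to $B$, the interior of the string is a block of $n-2\mu-2\ge0$ copies of $B$, and only two reflected pairs fail to sum to $B$ — the outermost pair $(d_0,\,b-d_0)$ (sum $B+1$) and the pair $(d_\mu-1,\,B-d_\mu)$ flanking the interior block (sum $B-1$). Since $b$ is even, $B$ is odd, so no reflected pair has two equal entries, and for every $i\in\{1,\ldots,B-1\}$ the sum-$B$ pairs and the interior $B$'s contribute equally to $k_i'$ and to $k_{B-i}'$. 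Writing $\delta(v,i)$ for the contribution of a single digit of value $v$ to $k_i'-k_{B-i}'$ (namely $+1$ if $v=i$, $-1$ if $v=B-i$, and $0$ otherwise), it follows that, for $\mu\ge1$,
\[
k_i'-k_{B-i}' = \delta(d_0,i)+\delta(b-d_0,i)+\delta(d_\mu-1,i)+\delta(B-d_\mu,i).
\]
I would dispose of the case $\mu=0$ at the outset: there the only non-$B$ reflected pair is $(d_0-1,\,b-d_0)$, which already sums to $B$, so $k_i'=k_{B-i}'$ holds at once.

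For (a), substituting $d_\mu=b-d_0$ makes the four special values $d_0,\,b-d_0,\,d_\mu-1,\,B-d_\mu$ regroup into the reflected pairs $\{d_0,\,B-d_0\}$ and $\{d_0-1,\,B-(d_0-1)\}$, so the four $\delta$-terms cancel in pairs and $k_i'=k_{B-i}'$ for $1\le i\le B-1$; for $k_B'\ge k_0'$ I would count the $B$'s and $0$'s in (\ref{ktres}) directly, splitting on $d_0=B$ versus $d_0<B$ and using $d_0\ge b/2$, to get $k_B'-k_0'=n-2\mu-2\ge0$ since $\mu\le\lfloor n/2\rfloor-1$. For (b), the displayed identity gives $|k_i'-k_{B-i}'|\le4$, and $|k_i'-k_{B-i}'|=4$ would require $d_0=b-d_0$, $d_\mu-1=B-d_\mu$ and $d_0=d_\mu-1$ simultaneously, which forces $b/2=b/2-1$; hence $|k_i'-k_{B-i}'|\le3$. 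Now $|k_i'-k_{B-i}'|=3$ forces three of the four special digit-values to equal a common value $v$, with the fourth lying outside $\{v,B-v\}$; running over the possible triples, using $1\le d_\mu\le d_0\le B$ to discard the impossible ones and folding in the coincidence $d_0=b-d_0$ that occurs when $d_0=b/2$, leaves exactly the configurations $(d_0,d_\mu)=(b/2,\,b/2-1)$ and $(d_0,d_\mu)=(b/2+1,\,b/2)$, in each of which $v\in\{b/2,\,b/2-1\}$; this is precisely the asserted restriction $i\in\{b/2,\,b/2-1\}$ together with the alternative $d_0=b/2$ or $d_\mu=b/2$. I expect this finite case check in (b) to be the main obstacle: the crude bound $4$ must be sharpened to $3$ and the two extremal configurations pinned down exactly, which means tracking every coincidence among the four special values — above all the $d_0=b/2$ degeneracy, which has to be folded into the count rather than dropped — and using the chain $1\le d_\mu\le d_0\le B$ systematically to kill spurious triples; the rest of the theorem is bookkeeping on (\ref{ktres}).

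For (c) and (d) I would write $k_B'$ and $k_0'$ out explicitly in terms of the $d_j$: from (\ref{ktres}), $k_B'$ is $(n-2\mu-2)$ plus the number of $j\in\{0,\ldots,\mu-1\}$ with $d_j=B$ plus one more when $d_0=1$, while $k_0'$ is the number of $j\in\{1,\ldots,\mu-1\}$ with $d_j=B$ plus one when $d_\mu=1$ plus one when $d_\mu=B$ (and for $\mu=0$, $k_B'=(n-2)+[d_0=1]$ and $k_0'=[d_0=1]$, where $[\,\cdot\,]$ denotes the indicator of the bracketed statement). The arithmetic fact that drives both parts is that $d_j=B$ forces $a_j=0$, so that by monotonicity of the $d_j$ the occurrence of $d_j=B$ for some $j$ forces $a_0=a_1=\cdots=a_j=0$. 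For (c), a split according to whether $d_\mu=B$, or $d_\mu<B$ with $d_j=B$ for some $j<\mu$, or no $d_j=B$ at all shows that the first two cases give $k_B'\ge k_0'$ outright, while the third gives $k_B'-k_0'=n-2\mu-2+[d_0=1]-[d_\mu=1]$; since $\mu\le\lfloor n/2\rfloor-1$ this is $\ge-1$, and it equals $-1$ exactly when $n$ is even, $\mu=n/2-1$, $d_\mu=1$ and $d_0\ne1$ (whence also $d_0<B$), in which case $k_B'=0$ and $k_0'=1$. For (d), the same formula gives $k_0'=[d_\mu=1]+[d_\mu=B]+r$ with $r=\#\{1\le j\le\mu-1:d_j=B\}$ (and $k_0'=[d_0=1]\le1$ when $\mu=0$); if $r\ge1$ then $x$ has at least $r+1$ zero digits, so $k_0\ge r+1$ and $k_0'\le1+r\le k_0$, while if $r=0$ then $k_0'\le1$, which is $\le k_0$ whenever $k_0\ge1$ and is exactly the exceptional bound when $k_0=0$; and since $r\ge1$ already forces $k_0\ge2$, the case $k_0=0$ always has $r=0$, so the bound $k_0'\le1$ genuinely applies there.
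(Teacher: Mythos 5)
Your proof is correct and follows essentially the same route as the paper's: both arguments read all four parts directly off the digit string (\ref{ktres}), pairing each $d_j$ with $B-d_j$ and isolating the two exceptional pairs $(d_0,\,b-d_0)$ and $(d_\mu-1,\,B-d_\mu)$, with the same triple-coincidence analysis for part (b) and the same counting of digits $0$ and $B$ for parts (c) and (d). Your write-up is merely more explicit than the paper's (the $\delta$-bookkeeping, the closed formulas for $k_0'$ and $k_B'$, the separate treatment of $\mu=0$, and the observation in (c) that $d_0\ne1$ is also needed to conclude $k_B'=0$), but the underlying argument is the same.
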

\begin{proof}
(a) The digit-string (\ref{ktres}) of the successor $T_{b,n}(x)$ consists of: a left section of $\mu+1$ digits $d_j$ $(j=0,1,\ldots,\mu)$, except that the last of these digits is $d_\mu-1$ rather than $d_\mu$; a right section of $\mu+1$ digits $B-d_j$ $(j=0,1,\ldots,\mu)$, except that the last of these digits is $b-d_0$ rather than $B-d_0$; and a central section of $n-2\mu-2$ digits $B$, which is absent or reduced to a single such digit if $\mu=\nu$ and $n$ is even or odd, respectively. If $d_\mu=b-d_0$, then $d_\mu-1=B-d_0$, and so \emph{every} occurrence of a digit $i$ in the left section has a corresponding occurrence of the digit $B-i$ in the right section: the two exceptional digits noted above compensate each other. The only remaining digits are the digits $B$ in the central section; so the only exception to $k_i'=k_{B-i}'$ is for $i=0$ and $i=B$, with $k_B>k_0$ unless the central section is absent.

(b) In (\ref{ktres}) the only cases where a digit $i$ in the left section does not have a corresponding digit $B-i$ in the right section are the pairs $(d_0,b-d_0)$ and $(d_\mu-1,B-d_\mu)$. It is possible for three (but not all four) of the digits in these pairs to be equal: either $d_0=b-d_0=B-d_\mu$ in which case $d_0=b/2$ and $k_{b/2}-k_{B-b/2}=3$ (note that $B-b/2=b/2-1$); or $b-d_0=d_\mu-1=B-d_\mu$ in which case $d_\mu=b/2$ and $k_{b/2}-k_{B-b/2}=-3$. In any other case, we may have up to two of the four digits equal to some common value $i^*$, for example $d_0=B-d_\mu=i^*$; so the count of such a digit would be up to two greater than that of its corresponding digit, i.e., we could have $k_{i^*}-k_{B-i^*}=2$.

(c) The condition $d_\mu=1$ ensures that there is one digit $0$ in the left section of (\ref{ktres}). The condition $d_0<B$ then ensures that $d_\mu\ne b-d_0$, so that the proof of part (a) does not apply; it also implies that there are no digits $B$ in the left section, and no digits $0$ in the right section. The condition $\mu=\nu=n/2-1$ (so $n$ is even) ensures that the central section of digits $B$ is absent. The conditions are all necessary: if any one of them fails, $k_B'\ge k_0'$.

(d) Suppose $a_j=0$ for $j=0,1,\ldots,k_0-1$, with $k_0\ge2$. It is then possible that $d_j=B$, so that $B-d_j=0$, for $j=1,\ldots,k_0-1$; and it is also possible that $d_\mu-1=0$. No other digits in (\ref{ktres}) can be $0$, so $k_0'\le k_0$. If $k_0=1$ or $k_0=0$, no digit in the right section of (\ref{ktres}) can be $0$; we may still have $d_\mu-1=0$, so $k_0'\le 1$. So it is only possible for $k_0'>k_0$ if $k_0=0$, with $k_0'=1$.
\end{proof}
The result that $k_i'=k_{B-i}'$ for $i=1,2,\ldots,B-1$ in part (a) of the theorem was proved in our earlier paper \cite{kdw1} under conditions which are certainly sufficient to ensure that $d_\mu=b-d_0$. We shall see that in most cases of interest, the condition $d_\mu=b-d_0$ is  satisfied with $a_0=0,a_{n-1}=B$ and $d_\mu=1$. When $d_\mu=b-d_0$, the Kaprekar index of (\ref{ktres}) is the same as if the left section consisted entirely of digits $d_j$ and the right section entirely of digits $B-d_j$, without exceptions; so in the theory below, we shall frequently omit any mention of the exceptional digits, for the sake of brevity in cases where we have shown the condition $d_\mu=b-d_0$ to apply.

Before considering specific types of fixed point and cycle, we note a general property of cycles that follows from Theorem \ref{noex}(d).
\begin{corollary}\label{k0const}
If $k_0>1$ in any member of a cycle, then $k_0$ remains constant throughout the cycle.
\end{corollary}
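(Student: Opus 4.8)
The plan is to deduce Corollary \ref{k0const} directly from Theorem \ref{noex}(d) together with the finiteness of a cycle. First I would set up notation: let $\mathbf{k}^{(0)}\mapsto\mathbf{k}^{(1)}\mapsto\cdots\mapsto\mathbf{k}^{(l)}=\mathbf{k}^{(0)}$ be one traversal of a cycle of length $l\ge2$, and write $c_m$ for the zeroth component $k_0$ of $\mathbf{k}^{(m)}$. By hypothesis there is some index with $c_m>1$; I want to show every $c_m$ equals that same value.

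The key observation is that Theorem \ref{noex}(d) gives $c_{m+1}\le c_m$ whenever $c_m\ge1$, and the only way to have $c_{m+1}>c_m$ is to have $c_m=0$ (and then $c_{m+1}\le1$). So along the cyclic sequence, $c_m$ is nonincreasing \emph{except} possibly when it is $0$. Now suppose, for contradiction, that the values $c_m$ are not all equal. If no $c_m$ is $0$, then $(c_m)$ is strictly decreasing at some step and nonincreasing everywhere, so $c_0=c_l<c_0$, a contradiction; hence some $c_m=0$. But a value of $0$ cannot occur in a cycle in which some member has $k_0>1$: once $c_m=0$ we have $c_{m+1}\le1$, and thereafter the sequence never again exceeds $1$ (by the same monotonicity, since from a value $\le1$ the next value is $\le1$ unless the current value is $0$, in which case it is still $\le1$). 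Chasing this around the cycle forces $c_{m'}\le1$ for all $m'$, contradicting the assumption that some member has $k_0>1$. Hence every $c_m$ is at least $1$, the sequence is genuinely nonincreasing, and being periodic it must be constant; since one term exceeds $1$, the constant value exceeds $1$, which is the claim.

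The only slightly delicate point — really the crux — is the bookkeeping that a single occurrence of $k_0=0$ somewhere in the cycle would propagate to force $k_0\le1$ \emph{throughout}, so that it is incompatible with the hypothesis; once that is isolated, everything else is an immediate consequence of ``periodic and monotone implies constant.'' I would state that propagation as a one-line induction around the cycle. No further input beyond Theorem \ref{noex}(d) and the periodicity built into the definition of a cycle is needed, so the argument is short; the write-up is essentially the two paragraphs above, made precise.
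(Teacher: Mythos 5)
Your proof is correct and takes essentially the same route as the paper: both arguments rest solely on Theorem \ref{noex}(d) (the value of $k_0$ can only increase from $0$ to $1$) combined with the periodicity of the cycle, so that any decrease from a value exceeding $1$ could never be undone. The paper states this in two sentences; your version merely makes the ``periodic and nonincreasing implies constant'' bookkeeping explicit.
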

\begin{proof}
From Theorem \ref{noex}(d), the value of $k_0$ can only increase from $0$ to $1$. Therefore, if it decreases from a value greater than $1$, it can never return to that greater value in a later succession.
\end{proof}

\subsection{Full-index fixed points and cycles}

\begin{definition}\label{defsym}
(a) A Kaprekar index is \emph{full} if 
$$k_i\ge1\text{ for }i=0,1,\ldots,B,$$
i.e., if no component of the index is zero. 

A cycle is \emph{full} if every member has a full Kaprekar index.

(b) A full Kaprekar index is \emph{symmetric} if 
$$k_{B-i}=k_i\text{ for }i=0,1,\ldots,B.$$
A cycle is \emph{symmetric} if every member has a symmetric Kaprekar index.

(c) A full Kaprekar index is \emph{almost-symmetric} if 
$$k_{B-i}=k_i\text{ for }i=1,\ldots,B-1\text{ and  }k_0<k_B<k_0+\min\{k_i:1\le i\le B-1\}.$$
The \emph{degree of asymmetry} $\alpha$ is defined in this case as
$$\alpha:=k_B-k_0.$$
A cycle is \emph{almost-symmetric} if every member has an almost-symmetric Kaprekar index.
\end{definition}
Fixed points with full Kaprekar indices, either symmetric or almost-symmetric, account for the majority of fixed points of the Kaprekar transformation in even bases; and similarly the majority of cycles consist entirely of integers with full Kaprekar indices, either symmetric or almost-symmetric. We now develop the theory of these fixed points and cycles.

\begin{theorem}\label{symsucc}
In any even base $b\ge4$, the successor $\mathbf{k}'$ of a symmetric or almost-symmetric Kaprekar index $\mathbf{k}$ has the following properties:-

(a) $\mathbf{k}'$ is symmetric or almost-symmetric according to whether $\mathbf{k}$ is symmetric or almost-symmetric.

(b) $k_0'=k_0$ and $k_B'=k_B$.

(c) $k_{2i}'=k_{B-2i}'=k_i$ for $1\le i\le (B-1)/2$.
\end{theorem}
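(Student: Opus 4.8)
The plan is to work directly from the digit string (\ref{ktres}) of the successor, using the hypothesis that $\mathbf{k}$ is symmetric or almost-symmetric to pin down exactly which differences $d_j$ occur. The crucial first observation is that for a full index we have $k_0\ge1$ and $k_B\ge1$, so $a_0=0$ and $a_{n-1}=B$, giving $d_0=B$. I would then argue that in both the symmetric and almost-symmetric cases, the largest positive difference $d_\mu$ equals $1$: indeed, since $k_{B-i}=k_i$ for $1\le i\le B-1$ and the digits come in the matched pairs contributing to $d_j=a_{n-1-j}-a_j$, the digit multiset is (almost) palindromic, which forces the innermost pair of digits to be equal or to differ by exactly $1$; the almost-symmetric inequality $k_0<k_B$ with $\alpha=k_B-k_0$ handles the small discrepancy. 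Once $d_0=B$ and $d_\mu=1$ are established, the condition $d_\mu=b-d_0$ from Theorem \ref{noex}(a) holds automatically, so the exceptional digits in (\ref{ktres}) compensate and I may compute $\mathbf{k}'$ as though the left section is exactly $d_0,d_1,\ldots,d_\mu$ and the right section exactly $B-d_0,\ldots,B-d_\mu$, plus a central block of $B$'s.

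Next I would compute the differences $d_j$ explicitly in terms of the components $k_i$. Listing the digits $a_0\le a_1\le\cdots\le a_{n-1}$ in order, the value $d_j=a_{n-1-j}-a_j$ as $j$ runs from $0$ to $\nu$ steps down through a predictable sequence determined by the $k_i$: it stays at $B$ while $j<k_0$ (matching leading $0$'s against trailing $B$'s), then drops, and so on. Because the index is (almost-)symmetric, the sequence of positive differences $(d_0,d_1,\ldots,d_\mu)$ is itself essentially a reversed "staircase", and one finds that the value $B-2i$ appears among the $B-d_j$ exactly $k_i$ times for each $i$ with $1\le i\le(B-1)/2$, and symmetrically $2i$ appears among the $d_j$ the same number of times — this is precisely the content of part (c). The count $k_0'$ of zeros in (\ref{ktres}) comes from the $B-d_j$ equal to $0$, i.e. from the $d_j=B$, of which there are $k_0$ (the leading zeros of $x$), giving $k_0'=k_0$; and $k_B'$ counts the central block of $B$'s together with any $d_j=B$ in the left section and any $B-d_j=B$ (i.e. $d_j=0$) — a careful tally, using $n=\sum k_i$, yields $k_B'=k_B$, which is part (b). Part (a) then follows: the equalities $k_{2i}'=k_{B-2i}'$ give $k_i'=k_{B-i}'$ for all $1\le i\le B-1$ once we note the map $i\mapsto 2i$ (and $i\mapsto B-2i$) permutes $\{1,\ldots,B-1\}$ suitably in an even base, and $k_0'=k_0$, $k_B'=k_B$ preserve the symmetric equality or the almost-symmetric strict inequalities (including the bound $k_B'<k_0'+\min\{k_i':1\le i\le B-1\}$, which holds because $\min\{k_i'\}=\min\{k_i\}$ by part (c)).

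The step I expect to be the main obstacle is the bookkeeping in part (c): one must show that as $j$ increases, each time $d_j$ decreases it decreases by exactly an appropriate amount so that the multiset $\{d_1,\ldots,d_\mu\}$ is exactly $\{2i \text{ with multiplicity } k_i : 1\le i\le (B-1)/2\}$ — and crucially that no \emph{odd} differences occur. This is where evenness of $b$ is used essentially: the palindromic pairing of an (almost-)symmetric index forces $a_{n-1-j}+a_j=B$ for the relevant $j$ when the two matched digits are "generic", so $d_j=a_{n-1-j}-a_j=B-2a_j$ is odd only if... wait, $B=b-1$ is odd, so $B-2a_j$ is odd — so in fact I would track $d_j$ via the \emph{other} pairing coming from the staircase structure and show the decreases happen in steps of $2$ relative to the previous plateau; handling the single "seam" created by the $\alpha$ extra copies of the digit $B$ in the almost-symmetric case (and correspondingly one fewer small digit) without disturbing the count in (c) is the delicate point, and I would treat the symmetric case ($\alpha=0$) first and then perturb.
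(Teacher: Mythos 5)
Your overall strategy is the paper's: use fullness to get $a_0=0$, $a_{n-1}=B$, hence $d_0=B$; show $d_\mu=1$ so that $d_\mu=b-d_0$ and the exceptional digits in (\ref{ktres}) compensate; then count the multiset of differences $d_j$ explicitly in terms of the components $k_i$. But that counting step --- which is the entire substance of parts (b) and (c) --- goes wrong, and you notice it going wrong without repairing it. You assert that the multiset of differences should be $\{2i$ with multiplicity $k_i\}$ and that ``no odd differences occur''; this is exactly backwards in the symmetric case. There $a_{n-1-j}=B-a_j$ for $0\le j\le\nu$, so $d_j=B-2a_j$ is always \emph{odd} (since $B=b-1$ is odd): the left section of (\ref{ktres}) consists of the odd digits $B-2i$, each with multiplicity $k_i$, while the right section consists of the even digits $2i=B-d_j$, each with multiplicity $k_i$; since every $i'\in\{1,\ldots,B-1\}$ is of exactly one of the forms $B-2i$ or $2i$ with $1\le i\le(B-1)/2$, this already yields (c). Your ``wait, $B-2a_j$ is odd'' is precisely the realization that the claimed multiset cannot be right, and the proposal then ends with an unexecuted promise to ``track $d_j$ via the other pairing,'' so the key computation is absent.

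Moreover, the almost-symmetric case is not the small perturbation you suggest: there the differences genuinely have \emph{both} parities. Walking down the staircase of matched digit pairs gives $\alpha=k_B-k_0$ instances of each even value $d_j=B-i$ ($i$ odd) and $k_i-\alpha$ instances of each odd value $d_j=B-2i$; the right section contributes the complementary counts, and only the \emph{sum} of the left- and right-section contributions equals $k_i$ for each of the digits $2i$ and $B-2i$. A scheme in which each target digit is supplied by a single section, as in your sketch, cannot produce this. Likewise $k_B'=k_B$ in the almost-symmetric case requires identifying that the central block of (\ref{ktres}) contains exactly $\alpha$ digits $B$ (because $\mu=\sum_{i=0}^{(B-1)/2}k_i-1$ while $n=2\sum_{i=0}^{(B-1)/2}k_i+\alpha$), which your ``careful tally'' does not pin down. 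The skeleton you give --- fullness forcing $d_0=B$, the condition $\alpha<\min\{k_i\}$ forcing $d_\mu=1$, and part (a) following from (b), (c) and the fact that $i\mapsto 2i$ and $i\mapsto B-2i$ together cover $\{1,\ldots,B-1\}$ --- is correct and matches the paper; what is missing is the actual digit count that proves (b) and (c).
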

\begin{proof}
If $\mathbf{k}$ is full, then $a_0=0$ and $a_{n-1}=B$, so $d_0=B$ and $b-d_0=1$.

Consider first the symmetric case. In an even base, $\mathbf{k}$ being symmetric implies: (i) $n$ is even; (ii) $\mu=\nu=n/2-1$, so the central section of digits $B$ in (\ref{ktres}) is absent; (iii) $a_{n/2-1}=b/2-1$ and $a_{n/2}=b/2$, so that $d_\mu=1$. The condition $d_\mu=b-d_0$ is now satisfied, so Theorem \ref{noex}(a) applies: $k_i'=k_{B-i}'$ for $i=1,2,\ldots,B-1$, and because there is no central section of digits $B$, $k_B'=k_0'$ also. Hence $\mathbf{k}'$ is symmetric.

Since $1\le d_j\le B$ for $0\le j\le\mu$, the only digits in (\ref{ktres}) that can be equal to $B$ are in the left section (the central digits $B$ being absent). A difference $d_j$ can only take the value $B$ if $a_j=0$ and $a_{n-1-j}=B$; and with $\mathbf{k}$ being symmetric, the numbers of these digits $0$ and $B$ are both equal to $k_B$. Hence there are $k_B$ instances of differences $d_j=B$ in (\ref{ktres}), so $k_B'=k_B$. The symmetry of $\mathbf{k}$ and $\mathbf{k}'$ then yields $k_0'=k_0$.

With $\mathbf{k}$ being symmetric, $a_{n-1-j}=B-a_j$ for $0\le j\le \nu$, so from (\ref{diffdef}), $d_j=B-2a_j$. So for each instance of a digit $a_j=i\le(B-1)/2$ in $x$, there is an odd digit $d_j=B-2i$ in the left section of (\ref{ktres}) and an even digit $B-d_j=2i$ in the right section of (\ref{ktres}). This accounts for all digits of $T_{b,n}(x)$ when $\mathbf{k}$ is symmetric, so $k_{2i}'=k_{B-2i}'=k_i$.

Now consider the almost symmetric case, in which $k_B=k_0+\alpha$ where the degree of asymmetry satisfies $1\le\alpha<\min\{k_i:1\le i\le B-1\}$: see Definition \ref{defsym}. For $0\le j\le k_0-1$, we have $a_j=0$ and $a_{n-1-j}=B$, so there are $k_0$ instances where $d_j=B$. For $k_0\le j\le k_B-1$, we have $a_j=1$ and $a_{n-1-j}=B$, so there are $k_B-k_0=\alpha$ instances where $d_j=B-1$. For $k_B\le j\le k_0+k_1-1$, we have $a_j=1$ and $a_{n-1-j}=B-1$, so there are $k_0+k_1-k_B=k_1-\alpha$ instances where $d_j=B-2$. Continuing likewise, there are $\alpha$ instances where $d_j=B-i$ for each odd $i$, and $k_i-\alpha$ instances where $d_j=B-2i$ for each $i\le(B-1)/2$. Observe that $B-i$ for all odd $i$ yields all possible even values of $d_j$, while $B-2i$ for all $i\le(B-1)/2$ yields all possible odd values of $d_j$.

Moving on to the right section of (\ref{ktres}), we now have $\alpha$ instances where $B-d_j=i$ for each odd $i$, and $k_i-\alpha$ instances where $B-d_j=2i$ for each $i\le(B-1)/2$, the latter covering all possible even digits. Combining digit counts from the left and right sections, we now have $k_i$ instances of each odd digit $B-2i$ for $i\le(B-1)/2$ (from $k_i-\alpha$ instances in the left section and $\alpha$ instances in the right section), and $k_i$ instances of each even digit $2i$ for $i\le(B-1)/2$ (from $\alpha$ instances in the left section and $k_i-\alpha$ instances in the right section). This yields the result in part (c) of the theorem, and, for part (a), that $k_{B-i}'=k_i'$ for $1\le i\le B-i$. The digit $0$ can only arise in the right section of (\ref{ktres}), as $B-d_j$ with $d_j=B$, of which there are $k_0$ instances; hence $k_0'=k_0$. It remains to observe that $\mu=\sum_{i=0}^{(B-1)/2}k_i-1$, from which the almost-symmetry yields that the central section of (\ref{ktres}) consists of $\alpha$ digits $B$. Hence $k_B'=k_0'+\alpha=k_B$, and the verification of almost-symmetry is complete.
\end{proof}

A large class of fixed points emerges from the above analysis:
\begin{theorem}\label{ffp}
In any even base $b\ge4$, a Kaprekar index with $k_1=k_2=\cdots=k_{B-1}\ge1$ and $1\le k_0\le k_B<k_0+k_1$ represents a fixed point of the Kaprekar transformation.
\end{theorem}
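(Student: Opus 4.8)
The statement asserts that any Kaprekar index $\mathbf{k}$ with $k_1 = k_2 = \cdots = k_{B-1} \ge 1$ and $1 \le k_0 \le k_B < k_0 + k_1$ is a fixed point. The natural approach is simply to feed such an index into Theorem~\ref{symsucc} and check that its successor equals itself. First I would observe that the hypothesis $1 \le k_0 \le k_B < k_0 + k_1 = k_0 + \min\{k_i : 1 \le i \le B-1\}$ means that $\mathbf{k}$ is either symmetric (if $k_0 = k_B$) or almost-symmetric (if $k_0 < k_B$), per Definition~\ref{defsym}; in either case it is certainly full. So Theorem~\ref{symsucc} applies.

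Theorem~\ref{symsucc}(b) gives $k_0' = k_0$ and $k_B' = k_B$ immediately. For the middle components, Theorem~\ref{symsucc}(c) gives $k_{2i}' = k_{B-2i}' = k_i$ for $1 \le i \le (B-1)/2$. Since $b$ is even, $B = b-1$ is odd, so as $i$ ranges over $1, \ldots, (B-1)/2$, the value $2i$ ranges over the even integers $2, 4, \ldots, B-1$ and the value $B - 2i$ ranges over the odd integers $1, 3, \ldots, B-2$; together these exhaust all of $\{1, 2, \ldots, B-1\}$. Hence every component $k_m'$ with $1 \le m \le B-1$ equals $k_i$ for some $i$ in that range, and by hypothesis all such $k_i$ are equal to the common value $k_1$. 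Therefore $k_m' = k_1 = k_m$ for all $1 \le m \le B-1$. Combining with the endpoint equalities, $\mathbf{k}' = \mathbf{k}$, so $\mathbf{k}$ is a fixed point.

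There is essentially no obstacle here: the result is a direct corollary of Theorem~\ref{symsucc}, and the only point requiring a word of care is the combinatorial observation that the two arithmetic progressions $\{2i\}$ and $\{B-2i\}$ partition $\{1, \ldots, B-1\}$ when $B$ is odd — which is precisely where the evenness of the base is used, and which guarantees that the "shuffle" of the equal middle components described by part~(c) sends the flat profile $k_1 = \cdots = k_{B-1}$ back to itself. I would also remark that the hypothesis $k_B < k_0 + k_1$ cannot be dropped, since it is exactly what keeps $\mathbf{k}$ inside the almost-symmetric regime where Theorem~\ref{symsucc} applies; without it the succession formula changes.
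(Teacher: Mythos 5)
Your proposal is correct and follows essentially the same route as the paper: identify $\mathbf{k}$ as symmetric or almost-symmetric so that Theorem~\ref{symsucc} applies, use part~(b) for the invariance of $k_0$ and $k_B$, and use part~(c) together with the observation that $\{2i\}$ and $\{B-2i\}$ for $1\le i\le(B-1)/2$ exhaust $\{1,\ldots,B-1\}$ to see that the flat middle profile is preserved. Your write-up is if anything slightly more explicit than the paper's about why the two progressions partition the middle digit range.
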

\begin{proof}
The conditions of this theorem imply that $\mathbf{k}$ is symmetric or almost symmetric, so Theorem \ref{symsucc} applies. For $1\le i\le (B-1)/2$, the values of $2i$ and $B-2i$ cover all digits $i'$ such that $1\le i'\le B-1$; so from Theorem \ref{symsucc}(c), if 
all $k_i$ are equal for $1\le i\le (B-1)/2$, then all $k_{i'}'$ are equal for $1\le i'\le B-1$, and their common value is equal to the common value of the $k_i$. Part (b) of Theorem \ref{symsucc} shows that $k_0$ and $k_B$ are fixed, completing the proof.
\end{proof}

Theorem \ref{ffp} yields the classes of \emph{symmetric fixed points} (when $k_B=k_0$) and \emph{almost-symmetric fixed points} (when $k_0< k_B<k_0+k_1$). In base ten, both of these classes can be identified as subclasses of Prichett et al.'s \cite{pll} Class A and of the third class listed in Dolan's \cite{dolan} Theorem 1. But our Theorem \ref{symsucc} yields not only the fixed points of Theorem \ref{ffp}, but also a proliferation of symmetric and almost-symmetric cycles of a particular length, as given in Theorem \ref{symcyc} below. That theorem uses the following definition which was introduced by Yamagami and Matsui \cite{yama18}, together with some related theory which was developed in our previous paper \cite{kdw1}:
\begin{definition}\label{defsigma}
For any odd integer $r\ge3$, let $\sigma(r)$ be the least positive integer such that $2^{\sigma(r)}\equiv\pm1(\!\!\!\!\mod r)$. See \seqnum{A003558} in OEIS.
\end{definition}
\begin{remark}\label{sigdiv}
If $r^*$ is a divisor of $r$, then $\sigma(r^*)$ is a divisor of $\sigma(r)$.
\end{remark}
Given $i_0\in\nn$ and odd $B\in\nn$ with $i_0<B/2$ and $\gcd(i_0,B)=1$, define $i_m$ recursively by 
\beq\label{qrules}
i_{m+1}=\begin{cases}
2i_m,&\text{ if }2i_m<\frac{B}{2};\\
B-2i_m,&\text{ if }2i_m>\frac{B}{2}.
\end{cases}
\eeq
Clearly $i_m\equiv\pm2^mi_0$ (mod $B$), so that by Definition \ref{defsigma}, $i_m=i_0$ when $m=\sigma(B)$ and not for any smaller value of $m$. The integers in the $i$\emph{-cycles} generated by this procedure are precisely $q_i/2$, where $q_i$ are integers in the $q$-cycles defined in our earlier paper \cite{kdw1}; as explained there, the cycles are derived from subgroups generated by $2$, or their cosets, in $\zz_B^*$, the multiplicative group of integers modulo $B$. In case $\gcd(i_0,B)=c>1$, the cycle containing $i_0$ will be derived from a subgroup or coset in $\zz_{B/c}^*$, and will have length $\sigma(B/c)$ rather than $\sigma(B)$; according to Remark \ref{sigdiv}, $\sigma(B/c)$ is a divisor of $\sigma(B)$. Thus, for any odd $B\ge3$, the integers $\{1,\ldots,(B-1)/2\}$ can be partitioned into $i$-cycles, which are ordered subsets with cardinalities equal to $\sigma(B)$ or a divisor of $\sigma(B)$.
\begin{theorem}\label{symcyc}
In an even base $b$, suppose a Kaprekar index $\mathbf{k}$ is either symmetric or almost symmetric, with not all the $k_i(i=1,2,\ldots,B-1)$ equal to each other. Then $\mathbf{k}$ is a member of a cycle with length equal to $\sigma(B)$ or a divisor of $\sigma(B)$; this includes the possibility of a fixed point (a cycle of length $1$).
\end{theorem}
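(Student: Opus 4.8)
The plan is to reduce everything to the action of the succession map on the ``half-index'' $(k_1,k_2,\ldots,k_{(B-1)/2})$, show that this action is simply a permutation of the entries, and identify that permutation with the one whose orbits are the $i$-cycles defined in (\ref{qrules}). First I would invoke Theorem \ref{symsucc}: since $\mathbf{k}$ is symmetric or almost-symmetric, so is every successor, and moreover $k_0$ and $k_B$ (hence the degree of asymmetry $\alpha=k_B-k_0$) are invariant along the forward orbit. Consequently each index occurring in the orbit of $\mathbf{k}$ is determined by its half-index $\mathbf{h}=(k_1,\ldots,k_{(B-1)/2})$ together with the fixed data $k_0,k_B$ and the relations $k_{B-i}=k_i$ for $1\le i\le B-1$. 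By Theorem \ref{symsucc}(c) the succession sends $k_i$ to $k'_{2i}$ when $2i<B/2$ and to $k'_{B-2i}$ when $2i>B/2$; equivalently $h'_{\pi(i)}=h_i$, where $\pi$ is the self-map of $\{1,\ldots,(B-1)/2\}$ given by the rule (\ref{qrules}). (Here $B$ is odd, so the case $2i=B/2$ never arises.)

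Next I would check that $\pi$ is a bijection of $\{1,\ldots,(B-1)/2\}$: its image lies in this set by the inequalities in (\ref{qrules}), and since $\pi(i)\equiv\pm 2i\pmod B$ with $2$ invertible modulo the odd number $B$, the equality $\pi(i_1)=\pi(i_2)$ forces $i_1\equiv\pm i_2\pmod B$, hence $i_1=i_2$ because $1\le i_1,i_2<B/2$. Thus $\mathbf{h}\mapsto\mathbf{h}'$ is the invertible operation $h'_j=h_{\pi^{-1}(j)}$, and after $m$ successions $h^{(m)}_j=h_{\pi^{-m}(j)}$. The orbits of $\pi$ are precisely the $i$-cycles, since $\pi$ is exactly the map used to generate them in (\ref{qrules}); and, as recorded in the discussion preceding the theorem, each $i$-cycle has cardinality $\sigma(B/c)$ for some divisor $c$ of $B$, with $\sigma(B/c)$ a divisor of $\sigma(B)$ by Remark \ref{sigdiv}. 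Hence the order $L$ of the permutation $\pi$, being the lowest common multiple of the $i$-cycle lengths, is a lowest common multiple of divisors of $\sigma(B)$ and therefore itself divides $\sigma(B)$.

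Finally, $\pi^{L}=\mathrm{id}$ gives $h^{(L)}_j=h_j$ for all $j$, and together with the invariance of $k_0,k_B$ and the symmetry relations this yields $\mathbf{k}^{(L)}=\mathbf{k}$. Since the transformation is deterministic, $\mathbf{k}$ therefore genuinely lies on a cycle, whose length — the least $l$ with $\mathbf{k}^{(l)}=\mathbf{k}$ — divides $L$ and hence divides $\sigma(B)$; the value $l=1$ (a fixed point, which occurs exactly when $\mathbf{h}$ is constant on each $i$-cycle) is among the possibilities, whether or not all the $k_i$ are equal. I expect the only genuinely delicate point to be the bookkeeping that turns Theorem \ref{symsucc}(c) into the clean statement ``succession permutes the half-index by $\pi$'' and that matches the orbits of $\pi$ with the quoted $i$-cycles; the remaining ingredient is just the elementary fact that a lowest common multiple of divisors of $\sigma(B)$ divides $\sigma(B)$.
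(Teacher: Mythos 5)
Your proposal is correct and follows essentially the same route as the paper: both rest on Theorem \ref{symsucc} to reduce the problem to the components $k_1,\ldots,k_{(B-1)/2}$, identify the succession with the permutation generated by the rule (\ref{qrules}) whose orbits are the $i$-cycles, and conclude that the period divides $\sigma(B)$. Your formulation via the order of the permutation $\pi$ (an lcm of divisors of $\sigma(B)$) is a slightly cleaner packaging of the case discussion the paper gives for when the cycle length is a proper divisor of $\sigma(B)$, but the underlying argument is the same.
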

\begin{proof}
First observe that according to Theorem \ref{symsucc}(b), the components $k_0$ and $k_B$ in symmetric and almost-symmetric Kaprekar indices remain invariant under the Kaprekar transformation. Thus, given [almost-]symmetry, we only need be concerned with components $k_1,\ldots,k_{(B-1)/2}$.

Theorem \ref{symsucc}(c) gives the succession formula $k_{B-2i}'=k_{2i}'=k_i$ for $1\le i\le (B-1)/2$. Observing that either $i'=2i$ or $i'=B-2i$ must satisfy $1\le i'\le (B-1)/2$, and comparing with (\ref{qrules}), we see that in an $m$'th successor, $k_{i_m}^{(m)}=k_{i_0}$ where $i_m$ is in the $i$-cycle starting at $i_0$; in particular, $i_m=i_0$ when $m=\sigma(B)$. Thus for every $i_0$ with $1\le i_0\le(B-1)/2$, $k_{i_0}^{(\sigma(B))}=k_{i_0}$ (with possibly $k_{i_0}^m=k_{i_0}$ for $m$ a smaller divisor of $\sigma(B)$ if $\gcd(i_0,B)>1$). Thus the $\sigma(B)$'th successor of a Kaprekar index $\mathbf{k}$ is equal to that original Kaprekar index. Hence $\mathbf{k}$ will in general be in a cycle of length $\sigma(B)$; however, there are exceptional circumstances in which the cycle length is a smaller divisor of $\sigma(B)$, as follows.

We may partition the components $k_1,\ldots,k_{(B-1)/2}$ into ordered subsets according to the partition of the integers $1,\ldots,(B-1)/2$ into $i$-cycles described above. The subsets are of size equal to $\sigma(B)$ or any divisor of $\sigma(B)$. First suppose that all subsets of size $\sigma(B)$ contain components which form $d$ identical sub-cycles of length $\sigma(B)/d$ for some divisor $d$ (possibly $d=\sigma(B)$, with all components in the subset being identical); then the Kaprekar indices may be in a cycle of length $\sigma(B)/d$. However, we also need to consider any subsets which are themselves of size $\sigma(B)/c$ for some divisor $c$, and which would generate cycle length $\sigma(B)/c$. The possibilities are illustrated in Example \ref{symcb64}.
\end{proof}
\begin{example}\label{symcb10}
In base $10$, with $B=9$, there is an $i$-cycle $(1,2,4)$ of length $\sigma(9)=3$, while the integer $i=3$, for which $\gcd(3,B)=3$, consitutes an $i$-cycle of length $\sigma(9/3)=1$. Consider an [almost-]symmetric Kaprekar index $\mathbf{k}$ with components 
$$k_1=k_8=t,k_2=k_7=u,k_3=k_6=v,k_4=k_5=w$$ 
(and $k_0\le k_9<k_0+\min\{t,u,v,w\})$: given the $i$-cycles $(1,2,4)$ and $(3)$, the successor will have components 
$$k_1'=k_8'=w,k_2'=k_7'=t,k_3'=k_6'=v,k_4'=k_5'=u,$$ 
the next successor will have components 
$$k_1''=k_8''=u,k_2''=k_7''=w,k_3''=k_6''=v,k_4''=k_5''=t,$$
and in the third successor the components will return to their values in $\mathbf{k}$.

For example, suppose that $\mathbf{k}=(1,4,3,4,4,4,4,3,4,2)$; note that $k_i=k_{9-i}$ for $1\le i\le8$ and that $k_0=1$, $k_9=2$, and $\min\{k_i:1\le i\le 8\}=3$, so the requirements for almost-symmetry in Definition \ref{defsym}(c) are fulfilled. Note also that although $k_1=k_4$, it is not true that $k_1=k_2=k_4$. The succession of Kaprekar indices as described above will be
\begin{align*}
(1,4,3,4,4,4,4,3,4,2)\mapsto(1,4,4,4,3,3,4,4,4,2)\mapsto&(1,3,4,4,4,4,4,4,3,2)\mapsto\\
&\qquad(1,4,3,4,4,4,4,3,4,2):
\end{align*}
a cycle of length $l=3=\sigma(9)$.

Now consider the Kaprekar index $\mathbf{k}=(1,4,4,3,4,4,3,4,4,2)$. Here, $k_1=k_2=k_4$, so going through the $i$-cycle will not change the values of these components; and the remaining component $k_3$ is in an $i$-cycle of length $1$, so never changes. Hence this Kaprekar index is a fixed point.
\end{example}
\begin{example}\label{symcb64}
For a case with a more intricate system of $i$-cycles, consider base $64$. We require the $i$-cycles for $B=63$. There are three of length $\sigma(63)=6$, consisting of integers coprime with $63$:
$$(1,2,4,8,16,31);\quad(5,10,20,23,17,29);\quad(11,22,19,25,13,26).$$
Integers $i$ with $\gcd(i,63)=3$ appear in an $i$-cycle of length $\sigma(63/3)=6$:
$$(3,6,12,24,15,30).$$
Next, $\sigma(63/7)=3$ and $\sigma(63/9)=3$, so there are two $i$-cycles of length $3$, consisting respectively of integers $i$ with $\gcd(i,63)=7$ and $\gcd(i,63)=9$:
$$(7,14,28);\quad(9,18,27).$$
Finally, the integer $21$ is in an $i$-cycle of length $1$. We have now accounted for the $31$ integers $i$ with $1\le i\le(63-1)/2$. 

Now consider a symmetric or almost-symmetric Kaprekar index in base $64$. Its components $k_i$ ($1\le i\le31$) are partitioned into ordered subsets of cardinalities $6$, $3$ or $1$, corresponding to the $i$-cycles, e.g., $(k_1,k_2,k_4,k_8,k_{16},k_{31})$ corresponding to the first $i$-cycle listed above. For nearly every choice of values of its components, the Kaprekar index will be in a cycle of length $6$. But suppose the components in the subsets corresponding to each of the four $i$-cycles of length $6$ have equalities between $1$'st and $4$'th, between $2$'nd and $5$'th, and between $3$'rd and $6$'th components: so in the first subset,
$$k_1=k_8,k_2=k_{16},k_4=k_{31},$$
and similarly in the other subsets,
$$k_5=k_{23},k_{10}=k_{17},k_{20}=k_{29};\quad k_{11}=k_{25},k_{22}=k_{13},k_{19}=k_{26};\quad k_3=k_{24},k_6=k_{15},k_{12}=k_{30}.$$
Then these ordered subsets of components would each consist of two identical cycles of length $3$, and since the only other subsets have cardinalities $3$ or $1$, the Kaprekar index would be in a cycle of length $3$.

Alternatively, suppose that the subsets of $6$ components each consist of three identical cycles of length $2$. So in the first subset,
$$k_1=k_4=k_{16},k_2=k_8=k_{31},$$
with similar equalities in the other three such subsets. This suggests that the Kaprekar index may be in a cycle of length $2$; but there are also subsets of $3$ components, so in fact the Kaprekar index would be in a cycle of length $6$, unless the components in each subset of cardinality $3$ were equal, i.e., unless
\beq\label{subs3}
k_7=k_{14}=k_{28}\text{ and } k_9=k_{18}=k_{27}.
\eeq
Finally, there would be the possibility of the Kaprekar index being a fixed point, if every subset consisted of equal components, i.e., if we had equalities, 
$$k_1=k_2=k_4=k_8=k_{16}=k_{31};\quad k_5=k_{10}=k_{20}=k_{23}=k_{17}=k_{29};$$
$$k_{11}=k_{22}=k_{19}=k_{25}=k_{13}=k_{26};\quad  k_3=k_6=k_{12}=k_{24}=k_{15}=k_{30}$$
in the subsets of cardinality $6$, as well as the equalities (\ref{subs3}).
\end{example}

It is interesting to see how the subgroup generated by $2$ in a multiplicative group of integers modulo some $r$ determines both the [almost-]symmetric cycles in even bases and the regular cycles in bases $b\equiv3$ (mod $4$) that were discussed in our earlier paper \cite{kdw1}, but in completely different ways. In the odd bases, $r$ was equal to $n-2$ or a divisor thereof (for primitive cycles), so cycle length was determined by digit-count $n$; but in even bases, $r=b-1$, so that in any given base, nearly all symmetric and almost-symmetric cycles have the same length, $\sigma(b-1)$, as observed by Myers \cite{myers2}. However, whereas in the odd bases, fixed points behaved strictly as cycles of length $l=1$, so could only arise with $r=3$, in even bases fixed points do not require $\sigma(B)=1$, and they exist in every even base.  The conjecture by Myers \cite{myers2} that in any \emph{given} even base there are only finitely many cycle lengths remains plausible; but the range of cycle lengths that exists among \emph{all} even bases is unbounded, assuming Artin's conjecture on primitive roots to be correct.  

\subsection{Zero-free fixed points}

There are two classes of fixed points that appear in all even bases $b\ge4$ and which have $k_0=0$, so no zeroes appear in the base-$b$ expression of the integer.

For each $t\in\nn$, there is a \emph{uniform zero-free fixed point}, with $k_0=0$ and $k_1=k_2=\cdots=k_B=t$; so the digit-count is $n=(b-1)t$. Referring to equation (\ref{diffdef}), we have $a_{n-1-j}=b-a_j$ so that $d_j=b-2a_j$ for all $j=0,1,\ldots,\mu$, where $\mu=t(B-1)/2-1$ and each digit from $1$ to $B$ appears $t$ times among the $a_j$; so there are $t$ instances of each even value of difference from $2$ to $b-2$ among the $d_j(0\le j\le\mu)$. Thus these even digits each appear $t$ times in the left section of (\ref{ktres}), while the odd digits from $1$ to $b-3$ each appear $t$ times in the right section of (\ref{ktres}); except for $d_\mu-1=1$ on the left replacing a digit $2$ which appears instead as $b-d_0$ on the right. In the central section of (\ref{ktres}) there are $t$ instances of the digit $B$. To summarise, (\ref{ktres}) contains $t$ instances of each digit from $1$ to $B$, as required for the fixed point. In base ten, these uniform zero-free fixed points fall within Prichett et al.'s \cite{pll} Class A and the second class listed in Dolan's \cite{dolan} Theorem 1.

For each $t\in\nn$, there is a \emph{triad fixed point}, with $k_{b/2-1}=k_{b/2}=k_{b-1}=t$ and all other $k_i=0$; so the digit-count is $n=3t$. The differences are $d_j=(b-1)-(b/2-1)=b/2$ for $j=0,1,\ldots,t-1$, yielding $t-1$ instances of the digit $b/2$ together with the digit $d_\mu-1=b/2-1$ in the left section of (\ref{ktres}), $t-1$ instances of the digit $B-b/2=b/2-1$ together with the digit $b-d_0=b/2$ in the right section of (\ref{ktres}), and $t$ instances of the digit $B$ in the central section. In base ten, these triad fixed points fall within Prichett et al.'s \cite{pll} Class D, and are the first class listed in Dolan's \cite{dolan} Theorem 1.

There do exist other classes of fixed points with $k_0=0$ in even bases. A universal feature is that $k_{b/2-1}=k_{b/2}=k_{b-1}$, but values of the remaining components of the Kaprekar indices are specified in various ways, and there do not appear to be consistent patterns over all even bases $b\ge6$. We shall therefore describe these classes of zero-free fixed points  in the sections of this paper dealing with the individual bases.

\subsection{Other fixed points and cycles}

The remaining fixed points and cycles that we have found may be classified as follows.

\emph{Non-symmetric $\sigma$-cycles.} These are controlled by $i$-cycles in a manner similar to the symmetric and almost-symmetric cycles, so have the same length $l=\sigma(B)$ (hence the designation ``$\sigma$-cycles''); but instead of the succession rule $k_{2i}'=k_{B-2i}'=k_i$ which applies in the case of [almost-]symmetry, there are rules of the form 
\beq\label{kdel}
k_{i'}'=k_i+\delta_i,
\eeq
where $\delta_i$ is an integer specific to each digit $i$, and
$$i'=\begin{cases}
2i,&\text{ if }0<i<\frac{B}{4};\\
B-2i,&\text{ if }\frac{B}{4}<i<\frac{B}{2};\\
2B-2i,&\text{ if }\frac{B}{2}<i<\frac{3B}{4};\\
2i-B,&\text{ if }\frac{3B}{4}<i<B.\\
\end{cases}$$
This specification of $i$\emph{-successors} is an extension of the rules (\ref{qrules}) to allow for non-symmetry, with $k_i'\ne k_{B-i}'$ in general; it yields the same $i$-cycles as (\ref{qrules}) when $i<B/2$, and also corresponding $i$-cycles when $i>B/2$. Clearly, the sum of the $\delta_i$ over each $i$-cycle must be zero to allow a Kaprekar cycle of length $\sigma(B)$, and this is indeed found in the non-symmetric $\sigma$-cycles displayed below in bases $6$ and $8$. Other features common to all cycles of this type that we have found (but which we have not proved to be necessary) are: (i) $k_0=1$ and $k_B=0$ in every member of a cycle; (ii) $-2\le\delta_i\le2$ for each digit $i$; (iii) $\sum_{i=0}^{(B-1)/2}k_i=\sum_{i=(B+1)/2}^B k_i$ in every cycle member.

\emph{Single-parameter fixed points and cycles.} The cycles may be of any length, with each fixed point or member of a cycle being given by a formula involving a single parameter $t$, which may take any positive integer value. The following features appear in every single-parameter fixed point or cycle that we have found, but have not been proved to be necessary: (i)  in fixed points, $k_0=1$ and $k_B=0$, while in cycles, every member has either $k_0=0$ or $k_0=1$, with both options appearing at least once in the cycle; (ii) other components are either fixed integers ($0$, $1$ or occasionally $2$) or of the form $t+\delta$, where $-1\le\delta\le3$. Although no cycle member can be symmetric or almost-symmetric, the bound on asymmetry from Theorem \ref{noex}(b), \mbox{$|k_i-k_{B-i}|\le3$} for $i=1,2,\ldots,B-1$, means that $k_{B-i}$ depends on $t$ if and only if $k_i$ depends on $t$ (except for the case $i=0$).

\emph{Special cycles.} For small digit-counts $n$, there also exist in each base a few individual fixed points and/or cycles which do not fit into any class described above; we refer to these fixed points and cycles as \emph{special}.

\subsection{Enumeration of fixed points and cycles}

For some of the classes of fixed points and cycles which we have described above, it is possible to derive general formulae for the number of fixed points or cycles of that class that exist with any given digit-count $n$ in all even bases $b\ge4$. These formulae are applied to the individual bases $4$, $6$ and $8$ in the relevant sections below which deal with those bases, where we also provide enumerations of fixed points and cycles of those classes for which no general formulae can be derived.

\subsubsection{Symmetric and almost-symmetric fixed points}

Here we enumerate fixed points of the type specified in Theorem \ref{ffp}, with \mbox{$k_1=k_2=\cdots=k_{B-1}$}. Fixed points with symmetric or almost-symmetric Kaprekar indices can arise with not all the $k_i(i=1,2,\ldots,B-1)$ equal to each other, as described in Theorem \ref{symcyc} and Examples \ref{symcb10} and \ref{symcb64}, and enumerating these would require the methods discussed below for symmetric and almost-symmetric cycles.

In base $b$, symmetric fixed points have digit-count $n=2k_0+(b-2)k_1$ with $k_0\ge1$ and $k_1\ge1$, so that $k_1\le(n-2)/(b-2)$. Thus there are
\beq\label{ensfp}
N_S=\left\lfloor\frac{n-2}{b-2}\right\rfloor
\eeq
symmetric fixed points with even digit-count $n$, and none with odd $n$ since $b$ is even.

For almost-symmetric fixed points, we have defined $\alpha=k_B-k_0$, and we also define $\beta=k_B+k_0$ so that the digit count is 
\beq\label{nask1}
n=(b-2)k_1+\beta.
\eeq
Note that $\alpha$, $\beta$ and $n$ all have the same parity. From the requirement for almost-symmetry that $1\le k_0<k_B<k_0+k_1$, we obtain that $\alpha\ge1$, $\beta\ge \alpha+2$, and $k_1\ge \alpha+1$, so that (\ref{nask1}) yields 
\beq\label{ndineqgen}
n\ge(b-1)\alpha+b.
\eeq 
Hence almost-symmetric fixed points may exist for odd $n\ge2b-1$ and even $n\ge3b-2$, with values of $\beta$ ranging from $\alpha+2$ to $n-(b-2)(\alpha+1)$ in steps of $b-2$; this gives 
$$\left\lfloor\frac{n-(b-1)\alpha-2}{b-2}\right\rfloor$$ 
values of $\beta$. For odd $n$, let $\alpha=2m-1$, so that 
$$m\le\frac{n-1}{2(b-1)}$$ 
from (\ref{ndineqgen}), and there are 
$$\left\lfloor\frac{n-2(b-1)m+b-3}{b-2}\right\rfloor$$ 
allowed values of $\beta$ for each such $m$; thus the total number of almost-symmetric fixed points for each odd $n$ is
\beq\label{ascountogen}
N_A=\sum_{m=1}^{\lfloor(n-1)/2(b-1)\rfloor}\left\lfloor\frac{n-2(b-1)m+b-3}{b-2}\right\rfloor.
\eeq
Similarly for even $n$, we can let $\alpha=2m$ and the number of almost-symmetric fixed points is
\beq\label{ascountegen}
N_A=\sum_{m=1}^{\lfloor(n-b)/2(b-1)\rfloor}\left\lfloor\frac{n-2(b-1)m-2}{b-2}\right\rfloor.
\eeq
Closed-form expressions for these sums depend on congruence of $n$ modulo $(b-1)(b-2)$, so evaluations are done below for individual bases. However, from (\ref{ascountogen}) and (\ref{ascountegen}), we see that $N_A$ takes the same value for an odd digit-count $n$ as for the greater even digit-count $n+b-1$: we can write
\beq\label{asevenodd}
N_A(n+b-1)=N_A(n)\quad\text{for odd $n$.}
\eeq
Furthermore, for even $n$, the combined number of symmetric and almost-symmetric fixed points is
$$N_{SA}:=N_S+N_A=\sum_{m=0}^{\lfloor(n-b)/2(b-1)\rfloor}\left\lfloor\frac{n-2(b-1)m-2}{b-2}\right\rfloor,$$
from which (by making the substitution $m=m^*-1$)
\beq\label{sasevenodd}
N_{SA}(n-(b-1))=N_A(n)\quad\text{for odd $n$.}
\eeq
Thus, if we evaluate $N_{SA}$ for each even $n$, we can then obtain $N_A$ for each odd $n$ using (\ref{sasevenodd}) (note that for odd $n$, $N_S=0$ so that $N_{SA}=N_A$), and then obtain $N_A$ for even $n$ from (\ref{asevenodd}). Since the expressions (\ref{ensfp}), (\ref{ascountogen}) and (\ref{ascountegen}) are increasing functions of digit-count $n$, there are more almost-symmetric fixed points with odd digit-count than for an adjacent even digit-count, but the total number of symmetric and almost-symmetric fixed points is greater for even than for adjacent odd digit-counts.

\subsubsection{Other classes of fixed points}

While there is a single triad fixed point for each digit-count $n$ that is divisible by $3$, and a single uniform zero-free fixed point for each $n$ that is divisible by $b-1$, other zero-free fixed points are more numerous in even bases $b\ge6$; these are enumerated below for individual bases. Single-parameter fixed points occur singly for certain values of digit-count (specified as $n\equiv d$ (mod $(b-2)$) for some $d<b-2$ in those bases that we have examined). Special fixed points occur only for particular digit-counts; these are again covered below for individual bases.

\subsubsection{Symmetric and almost-symmetric cycles}\label{enumsasc}

Consider first a base $b$ for which $\sigma(B)=(B-1)/2$, where $B=b-1$, so there is only a single $i$-cycle. This is the simplest case for enumeration, and in particular includes bases $4$, $6$ and $8$ which are discussed in more detail below; a necessary but not sufficient condition for it to apply is that $B$ is prime. In such a base, the only symmetric or almost-symmetric fixed points will be those with $k_1=k_2=\cdots=k_{B-1}$, discussed above; and if $\sigma(B)$ is prime, then all symmetric and almost-symmetric cycles will have length $l=\sigma(B)$ since the ordered subset of Kaprekar index components corresponding to the $i$-cycle cannot be split into several identical sub-cycles.

For ease of notation, let
$$C:=\frac{B-1}{2}=\frac{b}{2}-1.$$
With a single $i$-cycle, there are $C$ parameters $t_1,\ldots,t_C$, the values of which cycle through the Kaprekar index components $k_1,\ldots,k_C$. [Note that in Example \ref{symcb10} and in the discussion of bases $6$ and $8$ below, we use the notation $t,u,v$ rather than $t_1,t_2,t_3$.] With $\alpha=k_B-k_0$, we define $\gamma_c=t_c-\alpha$ for $c=1,\ldots,C$. The conditions of Theorem \ref{symcyc} (including the conditions for [almost-]symmetry) can be formulated as: $k_0\ge1$; either $\alpha=0$ (for symmetry) or $\alpha\ge1$ (for almost-symmetry); $\gamma_c\ge1$ for $c=1,\ldots,C$; and not all the $\gamma_c$ are equal to each other.

The digit-count is
\beq\label{digctsas}
n=k_0+k_B+2\sum_{c=1}^C t_c=2\left(k_0+\sum_{c=1}^C \gamma_c\right)+B\alpha.
\eeq
To satisfy the requirement that not all the $\gamma_c$ are equal, the minimum digit-count occurs with $\gamma_1=2$ and $\gamma_c=1$ for $c\ne1$; thus $n\ge b+2$ for a symmetric cycle, and 
\beq\label{mindigct}
n\ge b+2+(b-1)\alpha
\eeq
for an almost-symmetric cycle with given $\alpha\ge1$.

The enumeration then proceeds as follows. We first require a formula for $N_\gamma(C;\omega)$, the number of distinct cycles of $\gamma_c$ values (not all equal) with $\sum_{c=1}^C \gamma_c=\omega$ for any given $\omega$; details of this calculation are given below. We then need $N_{k\gamma}(C;\eta)$, the number of cycles with $k_0+\sum_{c=1}^C \gamma_c=\eta$, for any given $\eta$: since $k_0\ge1$ and $\omega=\sum_{c=1}^C \gamma_c\ge1$, this is
\beq\label{neta}
N_{k\gamma}(C;\eta)=\sum_{\omega=1}^{\eta-1}N_\gamma(C;\omega).
\eeq
From (\ref{digctsas}) we have \mbox{$\eta=(n-B\alpha)/2$}. Thus, by setting $\alpha=0$, the number of symmetric cycles for even $n$ is simply 
$$N_{SC}=N_{k\gamma}\left(C;\frac{n}{2}\right)$$
(and $N_{SC}=0$ for odd $n$). For almost-symmetric cycles, we note from (\ref{mindigct}) that $\alpha\le(n-b-2)/(b-1)$ and has the same parity as $n$. Thus for odd $n$, set $\alpha=2m-1$ so that $m\le(n-3)/2(b-1)$ and $\eta=(n-2Bm+B)/2$, and the number of almost-symmetric cycles is
\beq\label{nacodd}
N_{AC}=\sum_{m=1}^{\lfloor(n-3)/2(b-1)\rfloor}N_{k\gamma}\left(C;\frac{n-2Bm+B}{2}\right).
\eeq
For even $n$, we set $\alpha=2m$ and obtain
\beq\label{naceven}
N_{AC}=\sum_{m=1}^{\lfloor(n-b-2)/2(b-1)\rfloor}N_{k\gamma}\left(C;\frac{n-2Bm}{2}\right),
\eeq
and the total number of symmetric and almost-symmetric cycles is
\beq\label{nsac}
N_{SAC}=N_S+N_A=\sum_{m=0}^{\lfloor(n-b-2)/2(b-1)\rfloor}N_{k\gamma}\left(C,\frac{n-2Bm}{2}\right).
\eeq
From (\ref{nacodd}) and (\ref{naceven}) we see that the relations (\ref{asevenodd}) and (\ref{sasevenodd}) between the numbers of symmetric and almost-symmetric fixed points for even and odd digit-counts also apply to almost-symmetric cycles: we can write,
\beq\label{ascevenodd}
N_{AC}(n+b-1)=N_{AC}(n)=N_{SAC}(n-(b-1))\quad\text{for odd $n$.}
\eeq

Formulae for the above sums in closed form for general $b$ are not available, mainly because of the intricacies in the first stage of calculation. To find $N_\gamma(C;\omega)$, we first require $\widetilde{N_\gamma}(C;\omega)$, the number of ordered $C$-tuples of positive integers $(\gamma_1,\ldots,\gamma_C)$ that add to a given $\omega$. For $C=2$, this is $\widetilde{N_\gamma}(2;\omega)=\omega-1$, and since a $(C+1)$'th integer can be combined with any $C$-tuple with a sum no greater than $\omega-1$, we have the recursive formula, 
$$\widetilde{N_\gamma}(C+1;\omega)=\sum_{\theta=1}^{\omega-1}\widetilde{N_\gamma}(C;\theta).$$ 
By induction, it can be verified that this yields $\widetilde{N_\gamma}(C;\omega)$ as the binomial coefficient,
\beq\label{ngamt}
\widetilde{N_\gamma}(C;\omega)=\frac{(\omega-1)(\omega-2)\cdots(\omega-(C-1))}{(C-1)!}={\omega-1 \choose C-1}.
\eeq
Now, the members of a cycle of $\gamma_c$ values are the cyclic permutations of the $C$-tuple $(\gamma_1,\ldots,\gamma_C)$, so the number of cycles that we have found is $\widetilde{N_\gamma}(C;\omega)/C$; except that if $\omega$ is divisible by $C$ (the only case in which $\widetilde{N_\gamma}(C;\omega)$ may \emph{not} be divisible by $C$), we must exclude the $C$-tuple with $\gamma_1=\cdots=\gamma_C$ which corresponds to a fixed point according to Theorem \ref{ffp}. So the number of cycles of $\gamma_c$ values is 
\beq\label{ngamma}
N_\gamma(C;\omega)=\left\lfloor\frac{\widetilde{N_\gamma}(C;\omega)}{C}\right\rfloor.
\eeq
This is correct if $C$ is prime; but if $C$ is composite, some $C$-tuples $(\gamma_1,\ldots,\gamma_C)$ are in the form of $d$ identical sub-cycles of length $C/d$. So in such cases, a division by $C/d$ rather than $C$ is required to obtain the number of cycles from the number of $C$-tuples of that type. [For example, with $\omega=10$ and $C=4$, we have $\widetilde{N_\gamma}(4;10)=9\times8\times7/3!=84$; but there are four quadruples,
$$(1,4,1,4);\quad(2,3,2,3);\quad(3,2,3,2);\quad(4,1,4,1)$$
which are members of cycles of length $2$. So we have $N_\gamma(4;10)=80/4+4/2=22$.] This complication precludes the presentation of any completely general formula for $N_\gamma(C,\omega)$.

Finally, we recall that the above calculations apply to bases for which there is a single $i$-cycle. Where there are multiple $i$-cycles, we have 
\beq\label{etamom}
\eta=k_0+\omega_1+\cdots+\omega_G
\eeq 
for some $G>1$, where $\omega_g$ ($g=1,\ldots,G$) is the sum of the $\gamma_c$ in the $g$'th $i$-cycle. So (\ref{neta}) must be replaced by a calculation which allows for all possible combinations of $k_0$ and the $\omega_g$ that add to the given $\eta$. We do not explore the possibilities further here.

\subsubsection{Other classes of cycles}

Non-symmetric $\sigma$-cycles relate to $i$-cycles in a similar way to symmetric and almost-symmetric cycles, and (in all the examples that we have found) have an element of symmetry in that $\sum_{i=0}^{(B-1)/2}k_i=\sum_{i=(B+1)/2}^B k_i$, but have $k_0$ and $k_B$ fixed. Thus, in a base where there is only a single $i$-cycle, the number of non-symmetric $\sigma$-cycles will be simply the number of cycles of $C$ integers that add to a fixed $\omega=\sum_{i=1}^{C}k_i$; but the conditions on those integers may not be simply that they are positive. So calculations similar to that of $N_\gamma(C;\omega)$ for \mbox{[almost-]symmetric} cycles will be required, but with details depending on the specification of the particular non-symmetric $\sigma$-cycle. We provide details of the enumerations in the sections dealing with individual bases.

Similarly to single-parameter fixed points, single-parameter cycles occur singly for certain values of digit-count given by some linear formula in the parameter, and special cycles only occur for particular values of digit-count. These classes of cycle are again enumerated below for individual bases.

\section{Fixed points and cycles in base 4}\label{sec3}

We first list the fixed points and cycles that exist in base $4$, according to the classification in Section \ref{sec2}. We then enumerate them, and finally we provide a rigorous proof that our list is complete.

\subsection{List of fixed points and cycles}\label{b4fpc}

For base $4$, sequence \seqnum{A165021} in OEIS gives fixed points and least members of cycles (from which the remaining members of cycles may be computed), with sequence \seqnum{A165022} giving the lengths of cycles. From these data, we have classified the fixed points and cycles as follows.
\begin{description}
\item[Symmetric fixed points.] For every $k_0,k_1\in\nn$, the integer 
$$\underbrace{3\cdots3}_{k_0}\;\underbrace{1\cdots1}_{k_1-1}0\;\underbrace{2\cdots2}_{k_1}\;\underbrace{0\cdots0}_{k_0-1}\;1,$$
with Kaprekar index 
$$(k_0,k_1,k_1,k_0),$$
is a fixed point of the Kaprekar transformation, with digit-count $n=2(k_0+k_1)$, so symmetric fixed points exist with even digit-counts $n\ge4$.
\item[Almost-symmetric fixed points.] Given $k_0,k_1,k_3$ with $1\le k_0<k_3<k_0+k_1$ and $\alpha=k_3-k_0$, the integer
$$\underbrace{3\cdots3}_{k_0}\;\underbrace{2\cdots2}_{\alpha}\;\underbrace{1\cdots1}_{k_1-\alpha-1}\;0\;\underbrace{3\cdots3}_{\alpha}\;\underbrace{2\cdots2}_{k_1-\alpha}\;\underbrace{1\cdots1}_{\alpha}\;\underbrace{0\cdots0}_{k_0-1}\;1,$$
with Kaprekar index
$$(k_0,k_1,k_1,k_3),$$
is an almost-symmetric fixed point with digit-count $n=k_0+2k_1+k_3$. Since the conditions require $k_0\ge1$, $k_3\ge2$, and $k_0+k_1\ge k_3+1\ge k_0+2$ so that $k_1\ge2$, the minimum digit-count for an almost-symmetric fixed point is $n=7$.
\item[Triad fixed points.] There is no distinction between uniform zero-free and triad fixed points in base $4$. For each $t\in\nn$, the integer
$$\underbrace{2\cdots2}_{t-1}\;1\;\underbrace{3\cdots3}_{t}\;\underbrace{1\cdots1}_{t-1}\;2,$$
with Kaprekar index $(0,t,t,t)$, is a fixed point with digit-count $n=3t$.
\end{description}
Since $B=3$ and $\sigma(3)=1$, there are no symmetric or almost-symmetric cycles of length $l>1$ in base $4$.
\begin{description}
\item[Single-parameter cycles.] For each $t\in\nn$, there is a cycle of length $l=3$ and digit-count $n=3t+8$. The cycle of integers is
\begin{align*}
3\underbrace{2\cdots2}_{t-1}110\underbrace{3\cdots3}_{t+2}222\underbrace{1\cdots1}_{t} \mapsto 3\underbrace{2\cdots2}_{t+1}1\underbrace{3\cdots3}_{t+2}\underbrace{1\cdots1}_{t+3}&\mapsto\underbrace{2\cdots2}_{t+3}0\underbrace{3\cdots3}_{t}2\underbrace{1\cdots1}_{t+2}2\\
& \mapsto3\underbrace{2\cdots2}_{t-1}110\underbrace{3\cdots3}_{t+2}222\underbrace{1\cdots1}_{t} ,
\end{align*}
with Kaprekar indices
\beq\label{regcyc}
(1,t+2,t+2,t+3)\mapsto(0,t+4,t+1,t+3)\mapsto(1,t+2,t+5,t)\mapsto(1,t+2,t+2,t+3).
\eeq
\item[Special cycles.] There are four special cycles  in base $4$. We display the cycles of both the integers and their Kaprekar indices:
\begin{itemize}
\item With $n=2$, a cycle of length $l=2$:
\beq\label{scyc2}
03\mapsto21\mapsto03;\qquad(1,0,0,1)\mapsto(0,1,1,0)\mapsto(1,0,0,1).
\eeq
\item With $n=4$, a cycle of length $l=2$:
\beq\label{scyc4}
2022\mapsto1332\mapsto2022;\qquad(1,0,3,0)\mapsto(0,1,1,2)\mapsto(1,0,3,0).
\eeq
\item With $n=5$, a cycle of length $l=2$:
\beq\label{scyc5}
20322\mapsto23331\mapsto20322;\qquad(1,0,3,1)\mapsto(0,1,1,3)\mapsto(1,0,3,1).
\eeq
\item With $n=8$, a cycle of length $l=3$:
\begin{align}22033212\mapsto31333311&\mapsto22133112\mapsto22033212;\nonumber\\&(1,1,4,2)\mapsto(0,3,0,5)\mapsto(0,3,3,2)\mapsto(1,1,4,2).\label{scyc8}
\end{align}
\end{itemize}
\end{description}
We may observe that no member of the single-parameter cycle or any of the special cycles is symmetric or almost symmetric: in any Kaprekar index with $k_1=k_2\ge1$ in these cycles, either $k_0=0$ or $k_3\ge k_0+k_1$. On the other hand, there are several examples in these cycles of Kaprekar indices with $|k_2-k_1|=3$, which is allowed according to Theorem \ref{noex}(b): from the proof of that theorem, $k_2-k_1=3$ only if the predecessor in the cycle has $d_0=b/2=2$, and $k_2-k_1=-3$ only if the predecessor has $d_\mu=2$, as may be verified in the cycles displayed above.

From the above list of fixed points and cycles, we note that the special cycles for $n=2$ and $n=5$, the triad fixed point for $n=3$ and the almost-symmetric fixed point for $n=7$ (with Kaprekar index $(1,2,2,2)$) are unanimous. For digit-counts $n\le5$ in any base, Trigg \cite{trigg2,trigg3,trigg4,trigg5} has shown how an exhaustive list of fixed points and cycles may be obtained from examination of a manageable number of representative cases; for base $4$, the unanimous cycle with $n=2$, the triad fixed point with $n=3$, the symmetric fixed point and the cycle with $n=4$, and the unanimous cycle with $n=5$ may be found in Trigg's papers. Since his method is exhaustive, no further verification is needed of the absence of any other fixed points or cycles for $n\le5$, although this will be checked from the analysis below for general $n$.

\subsection{Enumeration of fixed points and cycles}

In base $b=4$, the formula (\ref{ensfp}) for the number of symmetric fixed points becomes
$$N_S=\frac{n-2}{2},$$
noting that symmetric fixed points exist only for even $n$. The formulae (\ref{ascountogen}) and (\ref{ascountegen}) for the numbers of almost-symmetric fixed points with odd and even digit-counts become
\beq\label{ascounto}
N_A=\sum_{m=1}^p\frac{n-6m+1}{2}=\frac{p}{2}(n-3p-2)\quad\text{ where }p=\lfloor(n-1)/6\rfloor\quad\text{($n$ odd)}
\eeq
and
\beq\label{ascounte}
N_A=\sum_{m=1}^p\frac{n-6m-2}{2}=\frac{p}{2}(n-3p-5)\quad\text{ where }p=\lfloor(n-4)/6\rfloor\quad\text{($n$ even)}.
\eeq
The only other fixed points in base $4$ are the triad fixed points that exist when $n$ is divisible by $3$. Evaluating (\ref{ascounto}) and (\ref{ascounte}) according to the congruence of $n$ modulo $6$, we can write down a set of formulae for the total number $N(n)$ of fixed points for any digit-count $n$; the annotations S, A, T indicate whether there are any symmetric, almost-symmetric or triad fixed points, respectively:
\begin{align*}
\text{When }n\equiv0\text{ (mod $6$): } &N(n)=\frac{n-2}{2}+\frac{(n-6)(n-4)}{24}+1\quad&\mbox{[S, A, T]}\\
\text{When }n\equiv1\text{ (mod $6$): } &N(n)=\frac{(n-1)(n-3)}{24}\quad&\mbox{[A]}\\
\text{When }n\equiv2\text{ (mod $6$): } &N(n)=\frac{n-2}{2}+\frac{(n-8)(n-2)}{24}\quad&\mbox{[S, A]}\\
\text{When }n\equiv3\text{ (mod $6$): }&N(n)=\frac{(n-3)(n-1)}{24}+1\quad&\mbox{[A, T]}\\
\text{When }n\equiv4\text{ (mod $6$): } &N(n)=\frac{n-2}{2}+\frac{(n-4)(n-6)}{24}\quad&\mbox{[S, A]}\\
\text{When }n\equiv5\text{ (mod $6$): } &N(n)=\frac{(n-5)(n+1)}{24}\quad&\mbox{[A]}
\end{align*}
For comparison with formulae presented below for higher bases, we may observe that the combined number of symmetric and almost-symmetric fixed points is given by the formula
$$N_{SA}:=N_S+N_A=\left\lfloor\frac{n(n+2)}{24}\right\rfloor\quad\text{for even $n$},$$
with $N_{SA}(n)=N_A(n+3)=N_A(n+6)$ for even $n$, and $N_{SA}(n)=N_A(n)$ for odd $n$: see equations (\ref{asevenodd}) and (\ref{sasevenodd}).

The only cycles in base $4$ are a special cycle with each of the digit-counts $n=2,4,5$, and $8$, and a single-parameter cycle with each digit-count $n\equiv2$ (mod $3$) for $n\ge11$.

\subsection{Proof that all fixed points and cycles have been found}

Our method requires an exhaustive list of succession formulae, giving components of $\mathbf{k}'$ in terms of components of $\mathbf{k}$, dependent on conditions (equalities and inequalities) satisfied by the latter. However, before displaying such a list, we note from Theorem \ref{noex}(c) that no Kaprekar index with $k_0>k_3$ can be a fixed point or a cycle member, unless $k_3=0$ and $k_0=1$; this reduces the number of cases to be considered. The following theorem arranges the remaining cases into classes labelled A--G, based mainly on which (if any) component(s) of $\mathbf{k}$ is/are zero. The cases within each class are labelled (i), (ii), $\ldots$; cases where $k_0>k_3>0$, or where $k_0>1$ and $k_3=0$, are omitted since $\mathbf{k}$ cannot then be a fixed point or cycle member.
\begin{theorem}\label{ksucc}Succession formulae in base $4$ are as follows.
\begin{enumerate}[label=\Alph*.]
\item Suppose $k_i\ge1$ for $i=0,1,2,3$, and $k_0=k_3$. Then:
\begin{enumerate}[label=(\roman*)]
\item if  $k_1>k_2$, then: $k_0'=k_3,\; k_1'=k_2'=k_2,\; k_3'=k_3+k_1-k_2;$
\item if  $k_1=k_2$, then: $k_0'=k_3,\; k_1'=k_2'=k_2,\; k_3'=k_3;$
\item if $k_1<k_2$, then: $k_0'=k_3,\; k_1'=k_2'=k_1,\; k_3'=k_3+k_2-k_1.$
\end{enumerate}
\item Suppose $k_i\ge1$ for $i=0,1,2,3$, and $k_0<k_3$. Then:
\begin{enumerate}[label=(\roman*)]
\item if $k_3<k_0+k_1<k_2+k_3$, then: $k_0'=k_0,\; k_1'=k_2'=k_1,\; k_3'=k_3+k_2-k_1;$
\item if $k_0+k_1=k_2+k_3$, then: $k_0'=k_0,\; k_1'=k_2'=k_1,\; k_3'=k_0;$
\item if $k_0+k_1>k_2+k_3$, then: $k_0'=k_0,\; k_1'=k_2'=k_3+k_2-k_0,\; k_3'=2k_0+k_1-k_2-k_3;$
\item if $k_0+k_1=k_3$, then: $k_0'=k_0-1,\; k_1'=k_1+2,\; k_2'=k_1-1,\; k_3'=k_0+k_2;$
\item if $k_0+k_1<k_3<k_0+k_1+k_2$, then: $k_0'=k_0,\; k_1'=k_2'=k_3-k_0,$

\qquad\qquad\qquad\qquad\qquad\qquad\qquad\qquad\qquad\qquad\qquad $k_3'=2k_0+k_1+k_2-k_3;$
\item if $k_3=k_0+k_1+k_2$, then: $k_0'=k_0,\; k_1'=k_2'=k_1+k_2,\; k_3'=k_0;$
\item if $k_3>k_0+k_1+k_2$, then: $k_0'=k_0,\; k_1'=k_2'=k_1+k_2,\; k_3'=k_3-k_1-k_2.$
\end{enumerate}
\item Suppose $k_i\ge1$ for $i=1,2,3$, and $k_0=0$. Then:
\begin{enumerate}[label=(\roman*)]
\item if $k_3<k_1<k_2+k_3$, then: $k_0'=1,\; k_1'=k_1-2,\; k_2'=k_1+1,\; k_3'=k_3+k_2-k_1;$
\item if $k_1=k_2+k_3$, then: $k_0'=1,\; k_1'=k_1-2,\; k_2'=k_1+1,\; k_3'=0;$
\item if $k_1>k_2+k_3$, then: $k_0'=1,\; k_1'=k_2+k_3-2,\; k_2'=k_2+k_3+1,\; k_3'=k_1-k_2-k_3;$
\item if $k_1=k_3$, then: $k_0'=0,\; k_1'=k_2'=k_3,\; k_3'=k_2;$
\item if $k_1<k_3<k_1+k_2$, then: $k_0'=1,\; k_1'=k_3-2,\; k_2'=k_3+1,\; k_3'=k_1+k_2-k_3;$
\item if $k_1+k_2=k_3$, then: $k_0'=1,\; k_1'=k_3-2,\; k_2'=k_3+1,\; k_3'=0;$
\item if $k_1+k_2<k_3$, then: $k_0'=1,\; k_1'=k_1+k_2-2,\; k_2'=k_1+k_2+1,\; k_3'=k_3-k_2-k_1.$
\end{enumerate}
\item Suppose $k_i\ge1$ for $i=0,2,3$, and $k_1=0$. Then:
\begin{enumerate}[label=(\roman*)]
\item if $k_0=k_3$, then: $k_0'=k_0-1,\; k_1'=k_2'=1,\; k_3'=k_0+k_2-1;$
\item if $k_0<k_3<k_0+k_2$, then: $k_0'=k_0,\; k_1'=k_2'=k_3-k_0,\; k_3'=2k_0+k_2-k_3.$
\item if $k_0+k_2=k_3$, then: $k_0'=k_0,\; k_1'=k_2'=k_2,\; k_3'=k_0;$
\item if $k_0+k_2<k_3$, then: $k_0'=k_0,\; k_1'=k_2'=k_2,\; k_3'=k_3-k_2.$
\end{enumerate}
\item Suppose $k_i\ge1$ for $i=0,1,3$, and $k_2=0$. Then:
\begin{enumerate}[label=(\roman*)]
\item if $k_0=k_3$, then: $k_0'=k_0-1,\; k_1'=k_2'=1\; k_3'=k_0+k_1-1;$
\item if $k_0<k_3<k_0+k_1$, then: $k_0'=k_0-1,\; k_1'=k_3-k_0+2,\; k_2'=k_3-k_0-1,$

\qquad\qquad\qquad\qquad\qquad\qquad\qquad\qquad\qquad\qquad $k_3'=2k_0+k_1-k_3;$
\item if $k_0+k_1=k_3$, then: $k_0'=k_0-1,\; k_1'=k_1+2,\; k_2'=k_1-1,\; k_3'=k_0;$
\item if $k_0+k_1<k_3$, then: $k_0'=k_0-1,\; k_1'=k_1+2,\; k_2'=k_1-1,\; k_3'=k_3-k_1.$
\end{enumerate}
\item Suppose $k_i\ge1$ for $i=0,1,2$, and $k_3=0$. Then:
\begin{enumerate}[label=(\roman*)]
\item if $k_0=k_2$, then: $k_0'=0,\; k_1'=k_2'=k_2,\; k_3'=k_1;$
\item if $k_0<k_2<k_0+k_1$, then: $k_0'=1,\; k_1'=k_2-2,\; k_2'=k_2+1,\; k_3'=k_0+k_1-k_2;$
\item if $k_0+k_1=k_2$, then: $k_0'=1,\; k_1'=k_2-2,\; k_2'=k_2+1,\; k_3'=0;$
\item if $k_0+k_1<k_2$, then: $k_0'=1,\; k_1'=k_0+k_1-2,\; k_2'=k_0+k_1+1,\; k_3'=k_2-k_1-k_0.$
\end{enumerate}
\item Cases with two components of $\mathbf{k}$ equal to zero:
\begin{enumerate}[label=(\roman*)]
\item If $k_0=k_1=0<k_2<k_3$, then: $k_0'=1,\; k_1'=k_2'=k_2-1,\; k_3'=k_3-k_2+1;$
\item If $k_0=k_1=0<k_2=k_3$, then: $k_0'=1,\; k_1'=k_2'=k_2-1,\; k_3'=1;$
\item If $k_0=k_1=0<k_3<k_2$, then: $k_0'=1,\; k_1'=k_2'=k_3-1,\; k_3'=k_2-k_3+1;$
\item If $k_0=k_2=0<k_1<k_3$, then: $k_0'=0,\; k_1'=k_2'=k_1,\; k_3'=k_3-k_1;$
\item If $k_0=k_2=0<k_1=k_3$, then: $k_0'=0,\; k_1'=k_2'=k_1,\; k_3'=0;$
\item If $k_0=k_2=0<k_3<k_1$, then: $k_0'=0,\; k_1'=k_2'=k_3,\; k_3'=k_1-k_3;$
\item If $k_0=k_3=0<k_1<k_2$, then: $k_0'=1,\; k_1'=k_2'=k_1-1,\; k_3'=k_2-k_1+1;$
\item If $k_0=k_3=0<k_1=k_2$, then: $k_0'=1,\; k_1'=k_2'=k_1-1,\; k_3'=1;$
\item If $k_0=k_3=0<k_2<k_1$, then: $k_0'=1,\; k_1'=k_2'=k_2-1,\; k_3'=k_1-k_2+1;$
\item If $k_1=k_2=0<k_0<k_3$, then: $k_0'=k_0-1,\; k_1'=k_2'=1,\; k_3'=k_3-1;$
\item If $k_1=k_2=0<k_0=k_3$, then: $k_0'=k_0-1,\; k_1'=k_2'=1,\; k_3'=k_0-1;$
\item If $k_1=k_3=0<k_0<k_2$, then: $k_0'=0,\; k_1'=k_2'=k_0,\; k_3'=k_2-k_0;$
\item If $k_1=k_3=0<k_0=k_2$, then: $k_0'=0,\; k_1'=k_2'=k_0,\; k_3'=0;$
\item If $k_2=k_3=0<k_0<k_1$, then: $k_0'=1,\; k_1'=k_2'=k_0-1,\; k_3'=k_1-k_0+1;$
\item If $k_2=k_3=0<k_0=k_1$, then: $k_0'=1,\; k_1'=k_2'=k_0-1,\; k_3'=1.$
\end{enumerate}
\end{enumerate}
\end{theorem}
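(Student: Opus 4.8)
The plan is to verify every case directly from the digit formula (\ref{ktres}); there is no conceptual difficulty here, only bookkeeping. Given a Kaprekar index $\mathbf{k}=(k_0,k_1,k_2,k_3)$, the ascending digit string of a representative is explicit --- $k_0$ zeros, $k_1$ ones, $k_2$ twos, $k_3$ threes --- so the differences $d_j=a_{n-1-j}-a_j$ are obtained by pairing the $j$-th smallest digit with the $j$-th largest. Since $(d_j)$ is non-increasing with entries in $\{0,1,2,3\}$, it consists of $p$ threes, then $q$ twos, then $r$ ones, then zeros, with $p+q+r=\mu+1$; and $p$, $q$, $r$ are piecewise-linear functions of the $k_i$ whose linear pieces are separated precisely by the inequalities labelling the subcases (i), (ii), $\ldots$ within each class. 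Once $p,q,r$ (equivalently $\mu$, $d_\mu$, $d_0$) are known, the successor's digit string (\ref{ktres}) is completely explicit --- a left block consisting of the $d_j$ with its last entry reduced by $1$, a central block of $n-2\mu-2$ copies of $B=3$, and a right block consisting of the $B-d_j$ with its last entry increased by $1$ --- and tallying the occurrences of $0,1,2,3$ yields $\mathbf{k}'$.

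First I would record that every $\mathbf{k}$ not forbidden by Theorem \ref{noex}(c) (i.e. with $k_0\le k_3$, or with $k_3=0$ and $k_0\le1$) and not a repdigit lies in exactly one of the classes A--G according to which components, if any, vanish, and that within each class the listed inequalities are mutually exclusive and jointly exhaustive over that region --- a short combinatorial check. Then I would go through the classes. Classes A and B (all components positive, so $a_0=0$, $a_{n-1}=B$, hence $d_0=B$ and $b-d_0=1$) carry the bulk of the work: here $p=k_0$, while $q$ and $r$ are fixed by comparing $k_3$ (or, in class A, $k_1$) against the partial sums $k_0+k_1$, $k_0+k_1+k_2$, $k_2+k_3$ --- exactly the subcase distinctions --- after which $\mathbf{k}'$ reads off as above; for instance in B(i) one finds $p=k_0$, $q=k_3-k_0$, $r=k_0+k_1-k_3$, and the tally gives $(k_0,k_1,k_1,k_3+k_2-k_1)$, matching the stated formula. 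Classes C--F (exactly one vanishing component) and G (two vanishing components) are shorter, but in several of them $d_0$ equals $2$ or even $1$ rather than $B$, so $b-d_0\ne1$ and the right block contributes differently; here one also invokes Theorem \ref{noex}(d) for the behaviour of $k_0'$.

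The step I expect to cost the most care is the treatment of the two exceptional positions in (\ref{ktres}) --- the $d_\mu-1$ ending the left block and the $b-d_0$ ending the right block --- in the degenerate situations. When $d_\mu=1$ the entry $d_\mu-1$ is a $0$ that merges into the zero-count, which is precisely why an index with no zero digits can nonetheless have $k_0'=1$ (the mechanism behind the $k_0'=1$ entries in classes C, F and parts of G); and when $k_0=1$, or $k_2=1$, or one of the relevant partial-sum differences equals $1$, a block of (\ref{ktres}) collapses to just its exceptional digit, so one must be careful not to double-count. The same care is needed to confirm that the central block length $n-2\mu-2$ is always $\ge0$ (equivalently $\mu\le\nu$, which also caps $p+q+r$ at $\nu+1$) and correctly absorbs the single extra central $B$ present when $n$ is odd and $\mu=\nu$ --- a point that, conveniently, requires no separate parity analysis since (\ref{ktres}) is expressed through $\mu$ rather than $\nu$. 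As a final safeguard I would check a sample of the resulting indices $\mathbf{k}'$ against Lemma \ref{divb}(b) and Theorem \ref{noex}(b),(d).
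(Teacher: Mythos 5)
Your proposal is correct and is essentially the paper's own argument: both proofs verify each of the cases A--G by writing out the successor's digit string explicitly and tallying digit counts, differing only in that the paper sets out the subtraction columnwise for a representative case (B(i)) while you read the same string off (\ref{ktres}) via the run-lengths $p,q,r$ of the differences $d_j$; your B(i) tally $(k_0,k_1,k_1,k_3+k_2-k_1)$ matches the paper's. The points you flag as delicate (the exceptional digits $d_\mu-1$ and $b-d_0$, collapsing blocks, and the central block of $B$'s) are exactly the ones the paper's worked example handles, so no gap.
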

\begin{proof}
Rather than using (\ref{ktres}), it is best to verify each case by setting out the subtraction as in the following example, which is case B(i) in which $k_0<k_3<k_0+k_1<k_2+k_3$. 
\begin{align}
&\underbrace{3\cdots\cdots\cdots3}_{k_3}\;\underbrace{2\cdots\cdots\cdots\cdots\cdots\cdots\cdots2}_{k_2}\;\underbrace{1\cdots\cdots\cdots\cdots\cdots1}_{k_1}\;\underbrace{0\cdots\cdots0}_{k_0}\nonumber\\
-&\underbrace{0\cdots0}_{k_0}\underbrace{1\cdots\cdots\cdots\cdots\cdots1}_{k_1}\;\underbrace{2\cdots\cdots\cdots\cdots\cdots\cdots2}_{k_2}\;\;\underbrace{3\cdots\cdots\cdots\cdots\cdots3}_{k_3}\nonumber\\
=\;&\underbrace{3\cdots3}_{k_0}\underbrace{2\cdots2}_{k_3-k_0}\;\underbrace{1\cdots\cdots1}_{k_0+k_1-k_3-1}0\;\underbrace{3\cdots\cdots\cdots3}_{k_3+k_2-k_1-k_0}\;\underbrace{2\cdots\cdots2}_{k_0+k_1-k_3}\;\,\underbrace{1\cdots\cdots1}_{k_3-k_0}\;\,\underbrace{0\cdots0}_{k_0-1}\;1.\label{subtex}
\end{align}
The total count of each digit in the result yields the components of $\mathbf{k}'$:
\begin{align*}
k_0'&=1+(k_0-1)=k_0,\\
k_1'&=(k_0+k_1-k_3-1)+(k_3-k_0)+1=k_1,\\
k_2'&=(k_3-k_0)+(k_0+k_1-k_3)=k_1,\\
k_3'&=k_0+(k_3+k_2-k_1-k_0)=k_3+k_2-k_1,
\end{align*} 
as given in case B(i) in the theorem. All other cases may be verified similarly.

Note that any Kaprekar index with three components equal to zero represents a repdigit, so is considered trivial.
\end{proof}

We first examine the possibility of fixed points among the cases listed in Theorem \ref{ksucc}.
\begin{theorem}\label{fpb4}
The only fixed points in base $4$ are the symmetric, almost-symmetric and triad fixed points described in Subsection \ref{b4fpc}.
\end{theorem}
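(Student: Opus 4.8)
The plan is to check, case by case through Theorem \ref{ksucc}, which Kaprekar indices satisfy $\mathbf{k}'=\mathbf{k}$, and to verify that the only solutions are exactly the three families described in Subsection \ref{b4fpc}. By Theorem \ref{noex}(c) we already know any fixed point has $k_0\le k_3$, except for the degenerate possibility $k_3=0$, $k_0=1$, so we only need to scan classes A, B, C, D, E, F, G in the form given, plus that one leftover possibility.

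First I would dispose of the classes whose succession formulae force a strict change. In each of classes D, E, F, G (and the subcases of B, C with $k_0>0$ or $k_0=0$ that alter $k_0$) one reads off from the stated formula that $k_0'\ne k_0$: e.g.\ in D(i), E(i)--(iv), G(x),(xi),(xiv),(xv) we get $k_0'=k_0-1<k_0$, while in C(i)--(iii),(v)--(vii), F(ii)--(iv), G(i)--(iii),(vii)--(ix) we get $k_0'=1>0=k_0$; in each such case $\mathbf{k}$ cannot be fixed. This kills all of D, E, and several subcases elsewhere immediately. That leaves the subcases in which $k_0'=k_0$: namely A(i)--(iii); B(i)--(iii),(v)--(vii); C(iv); F(i); and the $k_0=0$-preserving items in G, which are G(iv)--(vi),(xii),(xiii).

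Next I would impose $\mathbf{k}'=\mathbf{k}$ on each surviving subcase and solve the resulting equalities together with that subcase's defining inequalities. In A, the requirement $k_1'=k_2'$ forces $k_1=k_2$ (so A(i) and A(iii) collapse to $k_1>k_2$ resp.\ $k_1<k_2$ being impossible), and A(ii) with $k_0=k_3$, $k_1=k_2\ge1$ is precisely the symmetric fixed point family. In B, the conditions $k_1'=k_2'=k_1$ combined with $k_3'=k_3$ force $k_2=k_1$ in B(i), while B(ii) gives $k_3=k_0$ contradicting $k_0<k_3$, and B(iii),(v)--(vii) each yield an equation like $k_3+k_2-k_0=k_1$ or $k_1+k_2=k_1$ that, under the governing inequalities, is inconsistent with $k_0<k_3$ and $k_1,k_2\ge1$ — here one must be a little careful, but the arithmetic is short. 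Thus the only fixed points with all components positive and $k_0<k_3$ come from B(i) with $k_1=k_2$, which is exactly the almost-symmetric family (the inequality $k_3<k_0+k_1$ in B(i), together with $k_0<k_3$, reproduces Definition \ref{defsym}(c)). In F(i) we have $k_3=0$, $k_0=k_2$, and the formula gives $k_0'=0\ne k_0\ge1$, contradiction; and the degenerate $k_3=0,k_0=1$ possibility falls under F or G and likewise yields $k_0'=0$ or a change in $k_1$. Finally C(iv) ($k_0=0$, $k_1=k_3$) gives $k_3'=k_2$, so a fixed point needs $k_2=k_3=k_1$, which is the triad fixed point $(0,t,t,t)$; and G(iv)--(vi),(xii),(xiii) with two zero components and $k_0=0$ each force one more equality that reduces to $(0,t,t,t)$ or to a repdigit (three zero components, excluded as trivial).

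The main obstacle is purely bookkeeping: there are on the order of forty subcases in Theorem \ref{ksucc}, and for each of the dozen-odd that preserve $k_0$ one must correctly combine the fixed-point equations with the governing inequalities and confirm either a contradiction or membership in one of the three named families, taking care not to double-count (A(ii) and B(i)-with-$k_1=k_2$ share the boundary $k_0=k_3$, which is the symmetric case) and not to overlook boundary subcases like B(ii). I would organize this as a short table listing, for each surviving subcase, the fixed-point equations, the reduction, and the conclusion, so that the argument is transparently exhaustive. No single step is deep; the care is entirely in completeness.
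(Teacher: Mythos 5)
Your proposal is correct and follows essentially the same route as the paper: identify A(ii), B(i) with $k_1=k_2$, and C(iv) with $k_1=k_2$ as the symmetric, almost-symmetric and triad families, and show that imposing $\mathbf{k}'=\mathbf{k}$ in every other subcase of Theorem \ref{ksucc} contradicts either the succession formulae or the governing inequalities. The only quibbles are small bookkeeping slips (G(xiv), G(xv) have $k_0'=1$ rather than $k_0'=k_0-1$, and G(xii), G(xiii) have $k_0\ge1$ with $k_0'=0$ rather than preserving $k_0=0$), but these cases are still eliminated by your argument, so the proof goes through.
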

\begin{proof}
Case A(ii) is a symmetric fixed point, since $k_3'=k_3=k_0=k_0'$ and $k_1'=k_2'=k_2=k_1$.

Case B(i) is an almost-symmetric fixed point if $k_1=k_2$, since we then have $k_0'=k_0$, $k_3'=k_3$, $k_1'=k_2'=k_1=k_2$ and $k_0<k_3<k_0+k_1$.

Case C(iv) is a triad fixed point if $k_1=k_2$, since we then have $k_0'=k_0=0$ and $k_1=k_2=k_3=k_1'=k_2'=k_3'$.

In every other case, setting $k_i'=k_i$ for $i=0,1,2,3$ leads to an inconsistency. This may be an inconsistency between the formulae for $k_i'$ and the conditions on $k_i$, for example, in case A(i) the result $k_1'=k_2'$ is inconsistent with the condition $k_1>k_2$ when $k_i'=k_i$; or setting $k_i'=k_i$ may simply make the succession formulae internally inconsistent, for example, in case B(iv) where $k_0'=k_0-1$. Other cases are left to the reader to verify.
\end{proof}

Eliminating the possibility of cycles other than the single-parameter and special cycles described in Subsection \ref{b4fpc} is not so straightforward, and will be done in several stages.
\begin{theorem}\label{nofull}
There do not exist any full cycles in base $4$.
\end{theorem}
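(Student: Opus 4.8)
The plan is a proof by contradiction: suppose a full cycle $\mathbf{k}^{(0)}\mapsto\mathbf{k}^{(1)}\mapsto\cdots\mapsto\mathbf{k}^{(l-1)}\mapsto\mathbf{k}^{(0)}$ of length $l\ge2$ exists, all of whose members are full, and force its members into a shape so rigid that a single succession formula of Theorem \ref{ksucc} breaks it. First I would extract three constraints shared by every member. Since every member has $k_0\ge1$, Theorem \ref{noex}(d) gives $k_0'\le k_0$ at every step, so periodicity forces $k_0$ to be constant, say $k_0\equiv K\ge1$. Since every member is the successor of a full member and a full index cannot satisfy the exceptional hypothesis of Theorem \ref{noex}(c), every member has $k_3\ge k_0=K$. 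Finally, $k_1=k_2$ in every member: the predecessor of a given member is itself a full cycle member with $k_0=K\le k_3$, hence, being full, it falls under Class A or Class B of Theorem \ref{ksucc}; it is not case A(ii) (that index is a fixed point, not a cycle member) and not case B(iv) (whose succession formula has $k_0'=k_0-1$, contradicting the constancy of $k_0$); and each remaining case --- A(i), A(iii), B(i), B(ii), B(iii), B(v), B(vi), B(vii) --- has $k_1'=k_2'$, so the given member has $k_1=k_2$.

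Thus every cycle member has the form $(K,t,t,k_3)$ with $K\ge1$, $t\ge1$, $k_3\ge K$, and the next task is to eliminate every sub-case of Theorem \ref{ksucc} compatible with this shape but one. If $k_3=K$ the index is case A(ii), a symmetric fixed point by Theorem \ref{ffp}, impossible on a cycle of length $\ge2$. If $K<k_3<K+t$ the index is case B(i) and is itself an almost-symmetric fixed point by Theorem \ref{ffp} --- again impossible. If $k_3=K+t$ the index is case B(iv), forcing $k_0'=K-1$ and contradicting the constancy of $k_0$. If $K+t<k_3\le K+2t$ the index is case B(v) or B(vi), and a direct check shows that its successor --- namely $(K,k_3-K,k_3-K,2K+2t-k_3)$ or $(K,2t,2t,K)$ respectively --- satisfies the hypotheses of Theorem \ref{ffp} and is therefore a fixed point, so this index cannot lie on a cycle either. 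The only surviving possibility is $k_3>K+2t$, i.e.\ case B(vii), whose successor is $(K,2t,2t,k_3-2t)$: once more a full cycle member of the same shape, but with its middle component doubled.

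Hence every member of the supposed full cycle is a case B(vii) index, and passing from each member to the next exactly doubles the middle component; after $l$ steps the middle component of $\mathbf{k}^{(0)}$ would have been multiplied by $2^l\ge4$ while returning to its original value $t\ge1$, which is absurd. This contradiction establishes the theorem. I expect essentially all of the difficulty to be concentrated in the bookkeeping of the reduction: verifying that every admissible Class A/B succession formula other than A(ii) and B(iv) yields $k_1'=k_2'$, and checking that the successors produced in cases B(v) and B(vi) really do meet the hypotheses of Theorem \ref{ffp} --- these are the short computations that collapse the seven-way split of Class B to the single case B(vii); everything after the reduction is immediate.
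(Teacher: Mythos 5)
Your proof is correct and follows essentially the same route as the paper's: both arguments confine every member of a putative full cycle to classes A and B of Theorem \ref{ksucc}, establish that $k_0$ is constant and $k_1=k_2$ throughout (ruling out B(iv) and then A(i), A(iii)), eliminate the remaining subcases by showing the index or its successor is a symmetric or almost-symmetric fixed point, and finish with a monotonicity contradiction in case B(vii). The only cosmetic difference is that the paper derives the final contradiction from $k_3'<k_3$ whereas you use the doubling of the middle component; both are valid.
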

\begin{proof}
By Definition \ref{defsym}(a), every member of a full cycle in base $4$ would have $k_i\ge1$ for $i=0,1,2,3$; so every member would be in class A or B in Theorem \ref{ksucc}.

Any symmetric Kaprekar index in base $4$ is a fixed point, so is not a cycle member. The successor $\mathbf{k}'$ is symmetric in cases A(ii), B(ii) and B(vi), so these cases are eliminated from consideration.

In case B(iv), $k_0'<k_0$ so according to Corollary \ref{k0const}, this case can only be within a cycle if $k_0=1$ and $k_0'=0$. But then the successor $\mathbf{k}'$ would not be full.

Next observe that $k_1'=k_2'$ in every case in classes A and B except B(iv); but we have just shown that case B(iv) cannot exist in a full cycle. So we require $k_1=k_2$ in every member of a full cycle, which eliminates cases A(i) and A(iii).

Given that $k_1=k_2$, case B(i) is an almost-symmetric fixed point (so not a cycle member). Next, the condition $k_0+k_1>k_2+k_3$ in case B(iii) becomes $k_0>k_3$, which cannot occur in a cycle member. Setting $k_1=k_2$ in case B(v), together with the conditions for that case, yields $k_0'<k_3'<k_0'+k_1'$, so that $\mathbf{k}'$ is almost-symmetric and hence is a fixed point.

The only remaining possibility of a full cycle is if all members are case B(vii). But $k_3'<k_3$ in this case, so no such cycle can exist.
\end{proof}
\begin{corollary}\label{fullbiv}
The only possibility for a cycle to include any member(s) with a full Kaprekar index is if one such member is case B(iv), with $k_0=1$.
\end{corollary}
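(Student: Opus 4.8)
The plan is to turn around the argument already embedded in the proof of Theorem \ref{nofull}: rather than ruling out full cycles outright, we locate where a full member of a cycle must sit. First I would invoke Theorem \ref{nofull}: any cycle containing a full member cannot itself be full, so it also contains a non-full member (one with some zero component). Viewing the cycle as a cyclic sequence with each member tagged full or non-full, both tags occur, so there must be two consecutive members $\mathbf{k}\mapsto\mathbf{k}'$ with $\mathbf{k}$ full and $\mathbf{k}'$ non-full. The whole statement then reduces to showing that such a $\mathbf{k}$ can only be case B(iv) with $k_0=1$.

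Next I would pin down which case of Theorem \ref{ksucc} a full cycle member occupies. A full index has $k_i\ge1$ for all $i$, and by Theorem \ref{noex}(c) a cycle member cannot have $k_0>k_3$ (here $k_3\ge1$), so $k_0\le k_3$; this puts $\mathbf{k}$ in class A (if $k_0=k_3$) or class B (if $k_0<k_3$). The core step is then a finite inspection of the succession formulae in those two classes: in every case other than B(iv) — namely A(i), A(ii), A(iii), B(i), B(ii), B(iii), B(v), B(vi), B(vii) — the inequalities defining the case force $k_0'\ge1$, $k_1'=k_2'\ge1$ and $k_3'\ge1$, so $\mathbf{k}'$ is again full. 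This is routine but must be carried out case by case, and it is the step I would flag as the main (if modest) obstacle, since one has to use the case hypotheses (for example $k_1>k_2$ in A(i), or $k_3<k_0+k_1+k_2$ in B(v)) to secure the strict positivity of $k_3'$; most of it simply repeats computations already implicit in the proof of Theorem \ref{nofull}.

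Finally, since $\mathbf{k}'$ is non-full, the inspection forces $\mathbf{k}$ to be case B(iv). In case B(iv) we have $k_0'=k_0-1<k_0$; if $k_0\ge2$ this contradicts Corollary \ref{k0const}, which states that $k_0$ stays constant along a cycle once it exceeds $1$. Hence $k_0=1$, so $\mathbf{k}$ is case B(iv) with $k_0=1$, as claimed. (As a consistency check, the member $(1,t+2,t+2,t+3)$ of the single-parameter cycle is indeed case B(iv) with $k_0=1$, and it is precisely the full member whose successor $(0,t+4,t+1,t+3)$ is non-full.)
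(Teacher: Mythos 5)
Your proposal is correct and follows essentially the same route as the paper: invoke Theorem \ref{nofull} to locate a full member whose successor is non-full, then check every case in classes A and B of Theorem \ref{ksucc} to see that only B(iv) can produce a non-full successor, with $k_0=1$ forced. You merely spell out the case verification that the paper leaves to the reader (and your use of Corollary \ref{k0const} to pin down $k_0=1$ in case B(iv) is a slightly cleaner way to handle the subcase $k_0\ge2$, $k_1=1$, where the successor is also non-full but the member cannot lie in a cycle).
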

\begin{proof}
From Theorem \ref{nofull}, a cycle containing a member with a full Kaprekar index must also contain a member whose index is not full. Every case in classes A and B  in Theorem \ref{ksucc} has a full successor $\mathbf{k}'$, except for case B(iv) with $k_0=1$. [Verification is left to the reader; note that the conditions for each case need to be invoked.]
\end{proof}

Before proceeding further, we make an observation that will be useful in restricting the number of cases to be considered in the analysis.
\begin{lemma}\label{kspm3}
In every succession in base $4$, either $k_2'=k_1'$ or $k_2'=k_1'-3$ or $k_2'=k_1'+3$.
\end{lemma}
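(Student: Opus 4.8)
The statement asserts that in every succession in base $4$, the two ``middle'' components $k_1'$ and $k_2'$ of the successor satisfy $k_2' - k_1' \in \{-3,0,3\}$. Since $B = b-1 = 3$, this is exactly the $i=1$ case of Theorem~\ref{noex}(b): the digits $1$ and $2$ are the pair $i, B-i$ with $b/2-1 = 1$ and $b/2 = 2$, so the bound $|k_1' - k_2'| \le 3$ holds directly, and the values $\pm 2$ are the only ones that Theorem~\ref{noex}(b) does not already force into $\{0, \pm 3\}$. So the real content is to rule out $k_2' - k_1' = \pm 1$ and $k_2' - k_1' = \pm 2$ in base $4$.

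The cleanest approach is simply to inspect the digit-string \eqref{ktres} directly, specialized to $b = 4$, rather than to go case-by-case through Theorem~\ref{ksucc}. In base $4$ the string \eqref{ktres} reads $d_0, d_1, \ldots, d_{\mu-1}, d_\mu - 1, 3, \ldots, 3, 3 - d_\mu, \ldots, 3 - d_1, 4 - d_0$. Each difference $d_j$ lies in $\{1,2,3\}$ for $j = 0, \ldots, \mu$ (strictly positive by definition of $\mu$, and at most $B=3$). I would count occurrences of the digit $1$ and the digit $2$ in this string. The digit $3$ in the central block contributes to neither; the digit $4 - d_0 \in \{1,2,3\}$ and the digit $d_\mu - 1 \in \{0,1,2\}$ are the two ``exceptional'' entries. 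Away from those two exceptional slots, the left section contributes a digit $d_j$ and the right section the complementary digit $3 - d_j$: the possible complementary pairs are $(1,2)$, $(2,1)$, $(3,0)$. Crucially, a left-section digit equal to $1$ is matched by a right-section digit equal to $2$ and vice versa, so the non-exceptional slots contribute \emph{equally} to the count of $1$'s and of $2$'s. Hence $k_2' - k_1'$ is determined entirely by the two exceptional digits $d_\mu - 1$ and $4 - d_0$, each of which can be $1$, can be $2$, or can be neither. A short finite check of the (at most) $3 \times 3 = 9$ combinations of values of $(d_\mu - 1, \, 4 - d_0)$ shows that the difference $k_2' - k_1'$ always lands in $\{-3, 0, 3\}$ --- for instance, $d_0 = 2$ (so $4 - d_0 = 2$) together with $d_\mu = 3$ (so $d_\mu - 1 = 2$) gives two extra $2$'s and no extra $1$'s, i.e. $k_2' - k_1' = +2$?\,--- and here I would need to remember the constraint $d_\mu \le d_0$, which forbids exactly the combinations that would produce $\pm 1$ or $\pm 2$.

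The main obstacle, and where care is needed, is precisely this interaction between the two exceptional digits and the inequality $d_\mu \le d_0$ (equivalently $4 - d_0 \le 3 - d_\mu + 1$, i.e. the exceptional right digit is at most one more than what a ``symmetric'' right digit would be). One must also handle the degenerate geometry: when $\mu = \nu$ and $n$ is even the central block of $3$'s is empty, and when $d_\mu = b - d_0 = 4 - d_0$ (so $d_\mu - 1 = 3 - d_0$) the two exceptional digits are themselves a complementary pair and the argument of Theorem~\ref{noex}(a) applies giving $k_1' = k_2'$ outright. I would organize the finite check around the value of $d_0$: if $d_0 = 3$ then $4 - d_0 = 1$; if $d_0 = 2$ then $4 - d_0 = 2$; if $d_0 = 1$ then $d_j = 1$ for all $j \le \mu$ so $d_\mu = 1$, $d_\mu - 1 = 0$, and $4 - d_0 = 3$ --- and in each sub-case tabulate the contribution to $k_2' - k_1'$ against the three possible values of $d_\mu \in \{1, \ldots, d_0\}$, verifying membership in $\{-3,0,3\}$ each time.
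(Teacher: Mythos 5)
Your strategy is sound and would yield a correct proof, but it is a genuinely different route from the paper's. The paper disposes of the lemma in two lines: Lemma~\ref{divb}(b) applied to base $4$ says $3$ divides $k_1'+2k_2'$, hence $3$ divides $k_1'-k_2'$, and Theorem~\ref{noex}(b) already bounds $|k_1'-k_2'|$ by $3$, so the difference must be $0$ or $\pm3$. That argument reuses machinery already established and avoids any inspection of the digit string; your direct examination of \eqref{ktres} is more self-contained and makes visible \emph{why} the digits $1$ and $2$ are produced in complementary pairs, at the cost of a finite case analysis (including the degenerate case $\mu=0$, where the left and right sections collapse to the single digits $d_0-1$ and $4-d_0$, which you should treat separately). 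One substantive slip to fix in your writeup: when you evaluate the combination $d_0=2$, $d_\mu=3$, you count only the two exceptional digits $4-d_0=2$ and $d_\mu-1=2$ and conclude the swing is $+2$; but each exceptional digit also \emph{displaces} the digit that a complementary pairing would have put there ($3-d_0=1$ and $d_\mu=3$ respectively), so the correct net contribution is $+3$. Once you account for both the inserted and the displaced digit in each exceptional slot, all nine combinations of $(d_0,d_\mu)\in\{1,2,3\}^2$ give a difference in $\{-3,0,3\}$ --- the constraint $d_\mu\le d_0$ is not actually needed to exclude $\pm1$ or $\pm2$, contrary to what you suggest, and your final tabulation will confirm this.
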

\begin{proof}
Applying Lemma \ref{divb}(b) to base $4$, we have that $k_1'+2k_2'$ is divisible by $3$, and hence $k_1'-k_2'$ is divisible by $3$. Theorem \ref{noex}(b) then implies that the only possible values of $k_1'-k_2'$ are $0$, $3$ or $-3$.
\end{proof}
The lemma implies that in a cycle, \emph{every} member must have $k_1$ and $k_2$ either equal or differing by $3$.

\begin{theorem}\label{b4biv}
The only cycles in base $4$ which include a member with a full Kaprekar index are the special cycle (\ref{scyc8}) and the single-parameter cycles (\ref{regcyc}). 
\end{theorem}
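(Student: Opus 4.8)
The plan is to build on the two structural reductions already in place. By Corollary~\ref{fullbiv}, any cycle containing a member with a full Kaprekar index contains one such member $\mathbf{k}$ arising from case B(iv) of Theorem~\ref{ksucc} with $k_0=1$; that is, $\mathbf{k}=(1,k_1,k_2,k_1+1)$ with $k_1,k_2\ge1$, and case B(iv) gives its successor as $\mathbf{k}'=(0,k_1+2,k_1-1,k_2+1)$. Since $\mathbf{k}$ is itself a cycle member, hence a successor of the previous member, Lemma~\ref{kspm3} forces $k_2\in\{k_1,\ k_1+3,\ k_1-3\}$; so there are only three families to examine (the last requiring $k_1\ge4$).

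The core of the argument is to trace the forward orbit of $\mathbf{k}$, reading each successor from the applicable case of Theorem~\ref{ksucc}. In a finite system every orbit is eventually periodic, and $\mathbf{k}$ lies on a cycle precisely when its forward orbit returns to $\mathbf{k}$; so it suffices to follow the orbit until it either returns to $\mathbf{k}$ (identifying the cycle, which we then match to (\ref{regcyc}) or (\ref{scyc8})) or reaches an index already known -- via Theorem~\ref{fpb4}, or as a member of one of (\ref{scyc5}), (\ref{scyc8}), (\ref{regcyc}) -- to be a fixed point or to lie on a different cycle, in which case $\mathbf{k}$ cannot be a cycle member, contradicting the hypothesis and excluding that $(k_1,k_2)$. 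For $k_1$ large the orbit behaves uniformly: $\mathbf{k}'=(0,k_1+2,k_1-1,k_2+1)$ is a class-C index, to which one applies case C(i) when $k_2\le k_1$ and case C(v) when $k_2=k_1+3$, followed by one further succession falling in class B (case B(i)) or, at one boundary value of $k_1$, class A (case A(iii)). The outcomes are:
\begin{itemize}
\item $k_2=k_1$ with $k_1\ge3$: the orbit is $(1,k_1,k_1,k_1+1)\mapsto(0,k_1+2,k_1-1,k_1+1)\mapsto(1,k_1,k_1+3,k_1-2)\mapsto(1,k_1,k_1,k_1+1)$, a cycle of length $3$ which is exactly (\ref{regcyc}) with $t=k_1-2$;
\item $k_2=k_1+3$ with $k_1\ge4$, or $k_2=k_1-3$ with $k_1\ge6$: within three steps the orbit reaches an index of the shape $(1,k_1+2,k_1+2,k_1)$ or $(1,k_1,k_1,k_1-2)$, which by Theorem~\ref{ffp} is a symmetric or almost-symmetric fixed point, so $\mathbf{k}$ is not a cycle member.
\end{itemize}
It then remains to dispatch the finitely many small cases by direct computation: $k_2=k_1$ with $k_1\in\{1,2\}$; $k_2=k_1+3$ with $k_1\in\{1,2,3\}$; $k_2=k_1-3$ with $k_1\in\{4,5\}$. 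All but one lead into (\ref{scyc5}), (\ref{scyc8}) or (\ref{regcyc}) without returning to $\mathbf{k}$; the sole exception is $\mathbf{k}=(1,1,4,2)$ (the case $k_2=k_1+3$, $k_1=1$), whose orbit $(1,1,4,2)\mapsto(0,3,0,5)\mapsto(0,3,3,2)\mapsto(1,1,4,2)$ is the special cycle (\ref{scyc8}). Assembling the three families, the only cycles possessing a full member are the single-parameter cycles (\ref{regcyc}) and the special cycle (\ref{scyc8}).

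The difficulty is essentially bookkeeping rather than ideas: at each step one must identify the inequality regime among the (expressions in $k_1$ for the) components in order to select the right succession formula from Theorem~\ref{ksucc}, and this regime -- hence the applicable case -- shifts at the boundary values of $k_1$, so the generic trace must be supplemented by a handful of explicitly checked small values, and one has to confirm the case conditions (for instance $k_3<k_0+k_1<k_2+k_3$ for class B(i)) genuinely hold throughout the stated range of $k_1$. No individual step is hard, but the case split is wide enough that the chief risk is overlooking a sub-case.
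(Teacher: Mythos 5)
Your proposal is correct and follows essentially the same route as the paper's own proof: reduce via Corollary \ref{fullbiv} to a B(iv) member with $k_0=1$, use Lemma \ref{kspm3} to restrict to $k_2\in\{k_1,k_1\pm3\}$, and trace the forward orbit in each family, with a generic computation for large $k_1$ (landing on (\ref{regcyc}) or on a symmetric/almost-symmetric fixed point) and a finite list of small cases checked directly. The only difference is cosmetic bookkeeping at the boundary values $k_1=4$ (for $k_2=k_1+3$) and $k_1=6$ (for $k_2=k_1-3$), which the paper itemizes separately and you fold into the generic range with a note about case A(iii); the endpoints agree.
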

\begin{proof}
According to Corollary \ref{fullbiv}, to prove this theorem we only need to consider the succession from a Kaprekar index in case B(iv) of Theorem \ref{ksucc}, with $k_0=1$. Lemma \ref{kspm3} restricts consideration to indices with $k_2=k_1$ or $k_2=k_1\pm3$.  

First consider the option that $k_2=k_1$, so $\mathbf{k}=(1,k_1,k_1,k_1+1)$ in case B(iv). If $k_1\ge3$, this index is in the single-parameter cycle (\ref{regcyc}), with $t=k_1-2$. If $k_1=2$, then Theorem \ref{ksucc}, cases B(iv), C(ii) and F(iv) yield the succession  $(1,2,2,3)\mapsto(0,4,1,3)\mapsto(1,2,5,0)\mapsto(1,1,4,2)$, where the last index is in the special cycle (\ref{scyc8}). If $k_1=1$, then Theorem \ref{ksucc}, cases B(iv), G(vi) and C(i) yield the succession  $(1,1,1,2)\mapsto(0,3,0,2)\mapsto(0,2,2,1)\mapsto(1,0,3,1)$, where the last index is in the special cycle (\ref{scyc5}).

Next suppose that $k_2=k_1+3$, so $\mathbf{k}=(1,k_1,k_1+3,k_1+1)$ in case B(iv). If $k_1=1$, then $\mathbf{k}=(1,1,4,2)$, in the special cycle (\ref{scyc8}). If $k_1=2$, then Theorem \ref{ksucc}, cases B(iv) and C(vii) yield the succession  $(1,2,5,3)\mapsto(0,4,1,6)\mapsto(1,3,6,1)$, where the last index is in the single-parameter cycle (\ref{regcyc}) with $t=1$. If $k_1=3$, then Theorem \ref{ksucc}, cases B(iv), C(vi) and F(iv) yield the succession  $(1,3,6,4)\mapsto(0,5,2,7)\mapsto(1,5,8,0)\mapsto(1,4,7,2)$, where the last index is in the single-parameter cycle (\ref{regcyc}) with $t=2$.  If $k_1=4$, then Theorem \ref{ksucc}, cases B(iv), C(v) and A(iii) yield the succession  $(1,4,7,5)\mapsto(0,6,3,8)\mapsto(1,6,9,1)\mapsto(1,6,6,4)$, where the last index is an almost-symmetric fixed point. If $k_1\ge5$, then Theorem \ref{ksucc}, cases B(iv), C(v) and B(i) yield the succession  $(1,k_1,k_1+3,k_1+1)\mapsto(0,k_1+2,k_1-1,k_1+4)\mapsto(1,k_1+2,k_1+5,k_1-3)\mapsto(1,k_1+2,k_1+2,k_1)$, where the last index is an almost-symmetric fixed point.

Finally, suppose that $k_2=k_1-3$, so $\mathbf{k}=(1,k_1,k_1-3,k_1+1)$ in case B(iv), and note that we then require $k_1\ge4$. If $k_1=4$, then Theorem \ref{ksucc}, cases B(iv) and C(iii) yield the succession $(1,4,1,5)\mapsto(0,6,3,2)\mapsto(1,3,6,1)$, where the last index is in the single-parameter cycle (\ref{regcyc}) with $t=1$. If $k_1=5$, then Theorem \ref{ksucc}, cases B(iv), C(ii) and F(iv) yield the succession  $(1,5,2,6)\mapsto(0,7,4,3)\mapsto(1,5,8,0)\mapsto(1,4,7,2)$, where the last index is in the single-parameter cycle (\ref{regcyc}) with $t=2$. If $k_1=6$, then Theorem \ref{ksucc}, cases B(iv), C(i) and A(iii) yield the succession  $(1,6,3,7)\mapsto(0,8,5,4)\mapsto(1,6,9,1)\mapsto(1,6,6,4)$, where the last index is an almost-symmetric fixed point. If $k_1\ge7$, then Theorem \ref{ksucc}, cases B(iv), C(i) and B(i) yield the succession  $(1,k_1,k_1-3,k_1+1)\mapsto(0,k_1+2,k_1-1,k_1-2)\mapsto(1,k_1,k_1+3,k_1-5)\mapsto(1,k_1,k_1,k_1-2)$, where the last index is an almost-symmetric fixed point.
\end{proof}

\begin{theorem}
The only fixed points and cycles in base $4$ are those listed in Subsection \ref{b4fpc}.
\end{theorem}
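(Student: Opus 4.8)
The plan is to assemble the pieces already in place. Theorem~\ref{fpb4} shows that the only fixed points are the symmetric, almost-symmetric and triad fixed points of Subsection~\ref{b4fpc}, so only the cycles of length $l\ge2$ remain. By Corollary~\ref{fullbiv} and Theorem~\ref{b4biv}, every cycle that contains a member with a full Kaprekar index is either the special cycle~(\ref{scyc8}) or one of the single-parameter cycles~(\ref{regcyc}). Hence it suffices to classify the cycles all of whose members have non-full Kaprekar indices, and to check that these are exactly the special cycles~(\ref{scyc2}), (\ref{scyc4}) and~(\ref{scyc5}).

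So let every member $\mathbf{k}$ of such a cycle have a non-full, non-repdigit index; then $\mathbf{k}$ lies in one of the classes C--G of Theorem~\ref{ksucc}, and by Lemma~\ref{kspm3} it satisfies $|k_1-k_2|\in\{0,3\}$. In particular, whenever exactly one of $k_1,k_2$ is zero the other equals $3$, so class-D members have the form $(k_0,0,3,k_3)$ and class-E members the form $(k_0,3,0,k_3)$. I would next show $k_0\le1$ throughout: the exclusion rule preceding Theorem~\ref{ksucc} already forces $k_0\le1$ when $k_3=0$, and a short inspection of the succession formulae in classes D, E, F and G shows that if $k_0\ge2$ then $\mathbf{k}'$ is already full, the single exception being case E(ii) with $k_3=k_0+1$, whose orbit reaches a full index within two further steps; since a cycle is closed under succession, and a full index cannot be a member of an all-non-full cycle, this is impossible. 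With $k_0\le1$ and $|k_1-k_2|\in\{0,3\}$ in hand, the possible cycle members reduce to the one-parameter families $(0,a,b,c)$, $(1,a,b,0)$ and $(0,a,b,0)$ with $|a-b|\in\{0,3\}$ (classes C and F and the $k_0=k_3=0$ part of G), the simpler one-parameter families $(1,0,3,c)$, $(1,3,0,c)$, $(0,0,3,c)$, $(0,3,0,c)$, $(1,0,0,c)$, and the two sporadic indices $(1,0,3,0)$ and $(1,3,0,0)$.

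Finally I would trace each family forward, one or two successions at a time, via Theorem~\ref{ksucc}. In every branch the orbit is found to do exactly one of three things: reach a fixed point of Subsection~\ref{b4fpc} (typically a triad index $(0,t,t,t)$ or a symmetric index $(1,t,t,1)$); reach a full index, whence the cycle is excluded by Theorem~\ref{b4biv}; or enter one of the three $2$-cycles $(1,0,0,1)\leftrightarrow(0,1,1,0)$, $(1,0,3,0)\leftrightarrow(0,1,1,2)$, $(1,0,3,1)\leftrightarrow(0,1,1,3)$. A direct check from Theorem~\ref{ksucc} (cases G(viii), G(xi), G(xii), C(vi), C(vii), D(i)) that each of these three $2$-cycles is closed then finishes the proof; these are the special cycles~(\ref{scyc2}), (\ref{scyc4}), (\ref{scyc5}), and the small digit-counts $n\le5$ that they involve are also covered exhaustively by Trigg's analysis cited above.

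The main obstacle is the forward-tracing case analysis in the last step. Although the reduction to $k_0\le1$ and the constraint $|k_1-k_2|\in\{0,3\}$ shrink the work enormously, each of the two-free-parameter families $(0,a,b,c)$, $(1,a,b,0)$, $(0,a,b,0)$ still has to be split into the several subcases of Theorem~\ref{ksucc} determined by the relative sizes of $k_1+k_2$, $k_3$, $k_0+k_1$, and one must verify in each branch that a full index, a fixed point, or a known cycle is reached before the orbit can close up; throughout, one must keep invoking the exclusion of indices with $k_0>k_3>0$ or with $k_0>1$, $k_3=0$, since these cannot recur in a cycle.
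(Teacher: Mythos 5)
Your plan is essentially the paper's own proof: Theorem~\ref{fpb4} disposes of fixed points, Theorem~\ref{b4biv} (via Corollary~\ref{fullbiv}) disposes of cycles containing a full index, and what remains is an exhaustive forward trace of the non-full classes C--G of Theorem~\ref{ksucc}; the paper's proof of this theorem is exactly that case-by-case trace. Your additional organizing reductions (the filter $|k_1-k_2|\in\{0,3\}$ from Lemma~\ref{kspm3}, and the reduction to $k_0\le1$) are sound and would shorten the trace, though the paper does not use them here. Two caveats. First, your justification for the E(ii) exception is not quite right: with $k_0\ge2$ and $k_3=k_0+1$ the orbit does not necessarily reach a full index within two steps --- in the paper's trace it can land on a non-full Class C index, or (when $k_0=2$, $k_1=1$) run through $(0,3,0,3)\mapsto(0,3,3,0)$ to a symmetric fixed point --- so ``a full index cannot be a member of an all-non-full cycle'' does not close that branch; the correct tool is Corollary~\ref{k0const}, since $k_0'=k_0-1$ in every case of class E, so no class-E index with $k_0\ge2$ can be a cycle member at all. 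Second, the decisive content --- that every branch of the forward trace ends in a fixed point, a full index, or one of the three small $2$-cycles --- is asserted rather than verified; that verification is the entire body of the paper's proof, so as written your argument is a correct plan with its main step still to be carried out.
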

\begin{proof}
Theorems \ref{fpb4}, \ref{nofull} and \ref{b4biv} identify all full fixed points and all cycles containing a full Kaprekar index in base $4$, and also that the only non-full fixed points are triads. Thus it only remains to check for the existence of cycles consisting entirely of non-full indices, which will be done by determining the succession from all non-full indices, i.e., classes C--G in Theorem \ref{ksucc}. As we progress through an exhaustive consideration of cases in these classes, any case in which the succession leads to an index in a case already considered needs no further analysis. In particular, this applies if we encounter a full index; similarly, while considering the succession from cases in Classes D--G, we will frequently encounter an index of Class C, from which the succession will have already been determined; so the analysis of the case can then be terminated.
\begin{description}
\item[Case C(i).]  From the conditions $k_3<k_1<k_2+k_3$, we have that $k_1\ge2$ and so $k_2+k_3\ge3$. $\mathbf{k}'$ is full unless $k_1=2$, in which case $\mathbf{k}'=(1,0,3,k_2+k_3-2)$, in Class D. If $k_2+k_3=3$, then $\mathbf{k}'=(1,0,3,1)$, in the special cycle (\ref{scyc5}); while if $k_2+k_3>3$, then Theorem \ref{ksucc}, cases D(ii)--(iv) yield the next successor $\mathbf{k}''$ as a full index.
\item[Case C(ii).] From the condition $k_1=k_2+k_3$, we have that $k_1\ge2$. If $k_1=2$, then $\mathbf{k}'=(1,0,3,0)$, in the special cycle (\ref{scyc4}). If $k_1>2$, then $\mathbf{k}'=(1,k_1-2,k_1+1,0)$, which is case F(iv), with successor \mbox{$\mathbf{k}''=(1,k_1-3,k_1,2)$}. If $k_1=3$, this is $\mathbf{k}''=(1,0,3,2)$, which is case D(ii), with successor $\mathbf{k}'''=(1,1,1,3)$, which is full; while if $k_1>3$, then $\mathbf{k}''$ is full.
\item[Case C(iii).] In this case, $\mathbf{k}'$ is full unless $k_2+k_3=2$, in which case $\mathbf{k}'=(1,0,3,k_1-2)$ where, from the conditions, $k_1\ge3$. Indices of this form have already been considered in the succession from case C(i).
\item[Case C(iv).] If $k_2=k_3$, this is a triad fixed point. If $k_2<k_3$, then $\mathbf{k}'$ is case C(i), see above. If $k_2>k_3$, then $\mathbf{k}'$ is case C(v), see below.
\item[Cases C(v)--(vii).] In these cases $\mathbf{k}'$ has the same form as in the respective cases C(i)--(iii), with $k_1$ and $k_3$ exchanged. So the possible successions from $\mathbf{k}'$ are the same as in those cases.
\item[Case D(i).] In this case, $\mathbf{k}'$ is full unless $k_0=1$, in which case $\mathbf{k}'$ is Class C.
\item[Cases D(ii)-(iv).] In all these cases, $\mathbf{k}'$ is full.
\item[Case E(i).] In this case, $\mathbf{k}'$ is full unless $k_0=1$, in which case $\mathbf{k}'$ is Class C.
\item[Case E(ii).] In this case, $\mathbf{k}'$ is full unless $k_0=1$ or $k_3-k_0=1$. 

If $k_0=1$ but $k_3-k_0>1$, then $\mathbf{k}'$ is Class C. 

If $k_0>1$ and $k_3-k_0=1$, then $\mathbf{k}'=(k_0-1,3,0,k_0+k_1-1)$. 
\begin{itemize}
\item If $k_1\ge4$, this $\mathbf{k}'$ is case E(iv), with successor $\mathbf{k}''=(k_0-2,5,2,k_0+k_1-4)$, which is either full or Class C. 
\item If $k_1=3$, then $\mathbf{k}'$ is case E(iii), with successor $\mathbf{k}''=(k_0-2,5,2,k_0-1)$, which is either full or Class C. 
\item If $k_1=2$, then $\mathbf{k}'$ is case E(ii), with successor $\mathbf{k}''=(k_0-2,4,1,k_0)$, which is  either full or Class C.  
\item If $k_1=1$, then $\mathbf{k}'$ is case E(ii), with successor $\mathbf{k}''=(k_0-2,3,0,k_0+1)$; then if \mbox{$k_0=2$}, we have $\mathbf{k}''=(0,3,0,3)$, which is case G(v), with successor \mbox{$\mathbf{k}'''=(0,3,3,0)$}, case G(viii), with next successor $\mathbf{k}''''=(1,2,2,1)$, a symmetric fixed point; but if $k_0\ge3$, then $\mathbf{k}''$ is case E(iii), with successor $\mathbf{k}'''=(k_0-3,5,2,k_0-2)$, which is either full or Class C.
\end{itemize}

If both $k_0=1$ and $k_3-k_0=1$, then $\mathbf{k}'=(0,3,0,k_1)$. If $k_1=1$ or $k_1=2$, this $\mathbf{k}'$ is case G(vi), with successor $\mathbf{k}''=(0,k_1,k_1,3-k_1)$, which is Class C (and is in special cycle (\ref{scyc4}) if $k_1=1$). If $k_1=3$, then $\mathbf{k}'=(0,3,0,3)$, from which the succession has been determined above. If $k_1\ge4$, then $\mathbf{k}'$ is case G(iv), with successor $\mathbf{k}''=(0,3,3,k_1-3)$, which is Class C.
\item[Case E(iii).] In this case, $\mathbf{k}'$ is full or Class C unless $k_1=1$, when \mbox{$\mathbf{k}''=(k_0-1,3,0,k_0)$}. If $k_0=1$, then $\mathbf{k}''=(0,3,0,1)$, from which the succession has been determined in the analysis of case E(ii) above. If $k_0\ge2$, then $\mathbf{k}''$ itself is case E(ii).
\item[Case E(iv).] In this case, $\mathbf{k}'$ is full or Class C unless $k_1=1$, when \mbox{$\mathbf{k}''=(k_0-1,3,0,k_3-1)$}. But $k_3>1$, so the possibilities here are the same as in the analysis of case E(iii).
\item[Class F.] The successors $\mathbf{k}'$ in cases F(i)--(iv) are identical to those in cases C(iv)-(viii), respectively, with $k_1,k_2,k_3$ in Class C replaced by $k_0,k_1,k_2$ in Class F. So the further successions are as found for Class C.
\item[Case G(i).] In this case, $\mathbf{k}'$ is full unless $k_2=1$, when $\mathbf{k}'=(1,0,0,k_3)$. Since $k_3\ge2$ due to the conditions $k_3>k_2>0$, this $\mathbf{k}'$ is case G(x), with successor $\mathbf{k}''=(0,1,1,k_3-1)$, which is Class C.
\item[Case G(ii).] In this case, $\mathbf{k}'$ is a symmetric fixed point unless $k_2=1$. If $k_2=1$, then $\mathbf{k}'=(1,0,0,1)$, in the special cycle (\ref{scyc2}).
\item[Case G(iii).] In this case, $\mathbf{k}'$ has the same form as in case G(i), with $k_2$ and $k_3$ exchanged; so the further succession is the same as in case G(i).
\item[Case G(iv).] In this case, $\mathbf{k}'$ is Class C.
\item[Case G(v).] If $k_1=1$, then $\mathbf{k}$ is in the special cycle (\ref{scyc2}). If $k_1\ge2$, then $\mathbf{k}'$ is case G(viii), and the next successor $\mathbf{k}''$ is a symmetric fixed point.
\item[Case G(vi).] In this case, $\mathbf{k}'$ is Class C.
\item[Cases G(vii)--(ix).] In these cases, $\mathbf{k}'$ has the same form as in the respective case G(i)--(iii), with $k_2$ and $k_3$ replaced respectively with $k_1$ and $k_2$. So the further successions are as found in the earlier cases.
\item[Case G(x).] In this case, $\mathbf{k}'$ is either full or Class C.
\item[Case G(xi).] If $k_0=1$, then $\mathbf{k}$ is in the special cycle (\ref{scyc2}). If $k_0\ge2$, then $\mathbf{k}'$ is a symmetric fixed point.
\item[Cases G(xii) and (xiii).] In these cases, $\mathbf{k}'$ has the same form as in the respective case G(iv) and (v), with $k_1$ and $k_3$ replaced respectively with $k_0$ and $k_2$. So the further successions are as found in the earlier cases.
\item[Cases G(xiv) and (xv).] In these cases, $\mathbf{k}'$ has the same form as in the respective case G(i) and (ii), with $k_2$ and $k_3$ replaced respectively with $k_0$ and $k_1$. So the further successions are as found in the earlier cases.
\end{description}
\end{proof}

\section{Fixed points and cycles in base 6}\label{sec4}

We first list the fixed points and cycles that exist in base $6$, according to the classification in Section \ref{sec2}, and then we enumerate each of the classes of fixed points and cycles. We have not proved rigorously that our list is complete, but we provide some pointers as to how such a proof might be attempted.

\subsection{List of fixed points and cycles}

For base $6$, sequence \seqnum{A165060} in OEIS gives fixed points and least members of cycles (from which the remaining members of cycles may be computed), with sequence \seqnum{A165061} giving the lengths of cycles. From these data, we have classified the fixed points and cycles as follows.
\begin{description}
\item[Symmetric fixed points.] For every $k_0,k_1\in\nn$, the integer 
$$\underbrace{5\cdots5}_{k_0}\;\underbrace{3\cdots3}_{k_1}\;\underbrace{1\cdots1}_{k_1-1}0\underbrace{4\cdots4}_{k_1}\;\underbrace{2\cdots2}_{k_1}\;\underbrace{0\cdots0}_{k_0-1}1,$$
with Kaprekar index 
$$(k_0,k_1,k_1,k_1,k_1,k_0),$$
is a fixed point of the Kaprekar transformation, with digit-count $n=2k_0+4k_1$; so symmetric fixed points exist with even digit-counts $n\ge6$.
\item[Almost-symmetric fixed points.] Given $k_0,k_1,k_5$ with $1\le k_0<k_5<k_0+k_1$ and \mbox{$\alpha=k_5-k_0$}, the integer
$$\underbrace{5\cdots5}_{k_0}\;\underbrace{4\cdots4}_{\alpha}\underbrace{3\cdots3}_{k_1-\alpha}\;\underbrace{2\cdots2}_{\alpha}\;\underbrace{1\cdots1}_{k_1-\alpha-1}\;0\;\underbrace{5\cdots5}_{\alpha}\;\underbrace{4\cdots4}_{k_1-\alpha}\;\underbrace{3\cdots3}_{\alpha}\;\underbrace{2\cdots2}_{k_1-\alpha}\;\underbrace{1\cdots1}_{\alpha}\;\underbrace{0\cdots0}_{k_0-1}\;1,$$
with Kaprekar index
$$(k_0,k_1,k_1,k_1,k_1,k_5),$$
is an almost-symmetric fixed point with digit-count $n=k_0+4k_1+k_5$. Since the conditions require $k_0\ge1$, $k_5\ge2$, and $k_0+k_1\ge k_5+1\ge k_0+2$ so that $k_1\ge2$, the minimum digit-count for an almost-symmetric fixed point is $n=11$.
\item[Uniform zero-free fixed points.] For each $t\in\nn$, the integer
$$\underbrace{4\cdots4}_{t}\;\underbrace{2\cdots2}_{t-1}\;1\underbrace{5\cdots5}_{t}\;\underbrace{3\cdots3}_{t}\;\underbrace{1\cdots1}_{t-1}2,$$
with Kaprekar index 
$$(0,t,t,t,t,t),$$
is a fixed point of the Kaprekar transformation, with digit-count $n=5t$.
\item[Triad fixed points.] For each $t\in\nn$, the integer
$$\underbrace{3\cdots3}_{t-1}\;2\;\underbrace{5\cdots5}_{t}\;\underbrace{2\cdots2}_{t-1}\;3,$$
with Kaprekar index $(0,0,t,t,0,t)$, is a fixed point with digit-count $n=3t$.
\item[Other zero-free fixed points.] For each $t\in\nn$ and $u\in\nn$ with $u<t$, the integer
$$\underbrace{4\cdots4}_{u}\;\underbrace{3\cdots3}_{t-u}\;\underbrace{2\cdots2}_{u-1}\;1\underbrace{5\cdots5}_{t}\;\underbrace{3\cdots3}_{u}\;\underbrace{2\cdots2}_{t-u}\;\underbrace{1\cdots1}_{u-1}2,$$
with Kaprekar index 
$$(0,u,t,t,u,t),$$
is a fixed point of the Kaprekar transformation, with digit-count $n=3t+2u$. 

[Note: if we had specified $u\le t$ rather than $u<t$, our definition of ``other zero-free fixed points'' would also have embraced the uniform zero-free fixed points; but similar generalisations in higher bases are more complicated, which is why we have retained the distinction between the two classes of fixed points in base $6$. Setting $u=0$ yields the Kaprekar index of triad fixed points, but not the actual integer that is fixed under the Kaprekar transformation.]
\item[Single-parameter fixed points.] For each $t\in\nn$, the integer
$$4\underbrace{3\cdots3}_{t-1}\;2\underbrace{1\cdots1}_{t-1}\;0\underbrace{4\cdots4}_{t}\;3\underbrace{2\cdots2}_{t},$$
with Kaprekar index 
\beq\label{b6spfp}
(1,t-1,t+1,t,t+1,0),
\eeq
is a fixed point of the Kaprekar transformation, with digit-count $n=4t+2$. 
\end{description}
\begin{description}
\item[Symmetric and almost-symmetric cycles.] Since $\sigma(5)=2$, these cycles have length \mbox{$l=2$}. Given $k_0,k_5,t,u\in\nn$ with $u\ne t$ and either $k_0=k_5$ or $k_0< k_5<k_0+\min\{t,u\}$, cycles with digit-count $n=k_0+k_5+2(t+u)$ have the sequence of Kaprekar indices,
\beq\label{b6sasc}
(k_0,t,u,u,t,k_5)\mapsto(k_0,u,t,t,u,k_5)\mapsto(k_0,t,u,u,t,k_5).
\eeq
The integers represented by these Kaprekar indices can be obtained by the usual subtraction process (\ref{defkt}). 

Allowing $u=t$ would yield an [almost-]symmetric fixed point, rather than a cycle.
\item[Non-symmetric $\sigma$-cycles.] For each $t,u\in\nn$ with $u\ne t+1$, there is a cycle of length \mbox{$l=\sigma(5)=2$} and digit-count $n=2(t+u)+4$, with the sequence of Kaprekar indices,
\beq\label{nssigma6}
(1,t,u+1,u,t+2,0)\mapsto(1,u-1,t+2,t+1,u+1,0)\mapsto(1,t,u+1,u,t+2,0);
\eeq
so the rules of form (\ref{kdel}) for this cycle are:
$$k_2'=k_1+2,k_1'=k_2-2,k_4'=k_3+1,k_3'=k_4-1.$$
Allowing $u=t+1$ would yield a single-parameter fixed point with Kaprekar index of form (\ref{b6spfp}) after a change of notation, $t\to t-1$.
\item[Single-parameter cycles] For each $t\in\nn$, there is a cycle of length $l=2$ and digit-count $n=2t+5$, with the sequence of Kaprekar indices,
$$(1,t,1,0,t+2,1)\mapsto(0,2,t,t+1,0,2)\mapsto(1,t,1,0,t+2,1).$$
\item[Special cycles.] The following special cycles exist in base $6$. We display the cycles of both the integers and their Kaprekar indices:
\begin{itemize}
\item With $n=2$, a cycle of length $l=3$:
\begin{align*}&05\mapsto41\mapsto23\mapsto05;\\
&\quad(1,0,0,0,0,1)\mapsto(0,1,0,0,1,0)\mapsto(0,0,1,1,0,0)\mapsto(1,0,0,0,0,1).
\end{align*}
\item With $n=4$, a cycle of length $l=6$:
\begin{align*}&1554\mapsto4042\mapsto4132\mapsto3043\mapsto3552\mapsto3133\mapsto1554;\\
&\quad(0,1,0,0,1,2)\mapsto(1,0,1,0,2,0)\mapsto(0,1,1,1,1,0)\mapsto(1,0,0,2,1,0)\mapsto\\
&\qquad(0,0,1,1,0,2)\mapsto(0,1,0,3,0,0)\mapsto(0,1,0,0,1,2).
\end{align*}
\item With $n=5$, a cycle of length $l=2$:
\begin{align*}&31533\mapsto35552\mapsto31533;\\
&\quad(0,1,0,3,0,1)\mapsto(0,0,1,1,0,3)\mapsto(0,1,0,3,0,1).
\end{align*}
\item With $n=6$, a cycle of length $l=3$:
\begin{align*}&205544\mapsto525521\mapsto432222\mapsto205544;\\
&\quad(1,0,1,0,2,2)\mapsto(0,1,2,0,0,3)\mapsto(0,0,4,1,1,0)\mapsto(1,0,1,0,2,2).
\end{align*}
\item With $n=8$, a cycle of length $l=7$:
\begin{align*}&31104443\mapsto43255222\mapsto33204323\mapsto41055442\mapsto54155311\mapsto44404112\mapsto\\
&\quad43313222\mapsto31104443;\\
&\quad(1,2,0,2,3,0)\mapsto(0,0,4,1,1,2)\mapsto(1,0,2,4,1,0)\mapsto(1,1,1,0,3,2)\mapsto\\
&\qquad(0,3,0,1,1,3)\mapsto(1,2,1,0,4,0)\mapsto(0,1,3,3,1,0)\mapsto(1,2,0,2,3,0).
\end{align*}
\end{itemize}
\end{description}

We have not attempted a rigorous proof that our list of fixed points and cycles is complete. However, any such attempt should make use of the restrictions on fixed points and cycle members implied by Lemma \ref{divb}(b) and Theorem \ref{noex}(b). In base $6$ the former yields that in any succession, $k_1'+2k_2'+3k_3'+4k_4'$ is divisible by $5$, and hence that $(k_1'-k_4')+2(k_2'-k_3')$ is divisible by $5$. Then applying Theorem \ref{noex}(b), we find that there are only four possibilities for the Kaprekar index component differences in any fixed point or cycle member:
\begin{enumerate}
\item $k_1-k_4=0$ and $k_2-k_3=0$;
\item $k_1-k_4=\pm1$ and $k_2-k_3=\pm2$;
\item $k_1-k_4=\pm1$ and $k_2-k_3=\mp3$;
\item $k_1-k_4=\pm2$ and $k_2-k_3=\mp1$;
\end{enumerate}
Option 1 is found in all members of the symmetric and almost-symmetric cycles, and in all the fixed points except for the single-parameter fixed points, which take option 4. Option 4 is also found in all members of the non-symmetric $\sigma$-cycles and the single-parameter cycles. All options appear among the members of the five special cycles. 

Any proof that our list of fixed points in cycles is complete would need to consider successions from all four options for the differences $k_1-k_4$ and $k_2-k_3$, and would also need to take into account all possibilities for $k_0$ and $k_5$, for which the only restrictions are that either $k_5\ge k_0$ or $k_5=0$ with $k_0=1$. Hence the number of cases to be considered would be even greater than in the proof by exhaustion for base $4$.

\subsection{Enumeration of fixed points and cycles}

\begin{description}
\item[Symmetric and almost-symmetric fixed points.] In base $b=6$, the formula (\ref{ensfp}) for the number of symmetric fixed points for any even $n$ becomes
\beq\label{scount6}
N_S=\left\lfloor\frac{n-2}{4}\right\rfloor
\eeq 
(and $N_S=0$ for odd $n$).

The formulae (\ref{ascountogen}) and (\ref{ascountegen}) for the numbers of almost-symmetric fixed points with odd and even digit-counts become
\beq\label{ascounto6}
N_A=\sum_{m=1}^{\lfloor(n-1)/10\rfloor}\left\lfloor\frac{n-10m+3}{4}\right\rfloor\quad\text{for odd $n$}
\eeq
and
\beq\label{ascounte6}
N_A=\sum_{m=1}^{\lfloor(n-6)/10\rfloor}\left\lfloor\frac{n-10m-2}{4}\right\rfloor\quad\text{for even $n$}.
\eeq
Closed-form expressions for these sums depend on congruence of $n$ modulo $20$, but we can reduce the amount of calculation required by noting equations (\ref{asevenodd}) and (\ref{sasevenodd}), which in base $6$ can be combined in the form
\beq\label{nansa6}
N_A(n+10)=N_A(n+5)=N_{SA}(n)\quad\text{for even $n$},
\eeq
where $N_{SA}=N_S+N_A$. Thus it is only necessary to find a formula for $N_{SA}$ for even $n$, and we then have $N_A$ and $N_{SA}$ for all even and odd $n$ (noting that $N_A=0$ for odd $n\le9$ and for even $n\le14$, and that $N_{SA}=N_A$ when $n$ is odd). Evaluating (\ref{scount6}) and (\ref{ascounte6}) and adding, we find that all cases are covered by the formula,
$$N_{SA}=\left\lfloor\frac{(n+2)^2}{80}+\frac{1}{5}\right\rfloor\quad\text{for even $n$}.$$
Results of a numerical evaluation of this formula for $6\le n\le40$ are tabulated below (note that $N_{SA}=0$ for $n<6$), with all non-zero values of $N_A$ then following from (\ref{nansa6}).

\begin{center}
\begin{tabular}{|c|c||c|c||c|c||c|c||c|c|}
\hline
$n$&$N_{SA}$&$n$&$N_{SA}$&$n$&$N_{SA}$&$n$&$N_{SA}$&$n$&$N_{SA}$\\
\hline
6&1&14&3&22&7&30&13&38&20\\
8&1&16&4&24&8&32&14&40&22\\
10&2&18&5&26&10&34&16&&\\
12&2&20&6&28&11&36&18&&\\
\hline
\end{tabular}
\end{center}

\item[Zero-free fixed points.]
To enumerate all zero-free fixed points (uniform, triad and others) with a given digit-count $n$, we need to enumerate pairs $(t,u)$ of non-negative integers with $u\le t$ and $3t+2u=n$. This is equivalent to enumerating all pairs $(t,v)$ of non-negative integers with $3t+5v=n$. Since $3$ is coprime with $5$, this yields $N_Z$, the number of zero-free fixed points when $n\equiv r$ (mod $15$), as
$$N_Z=\left\lfloor\frac{n}{15}\right\rfloor+q,$$
where $q=0$ for $r=1,2,4,7$, and $q=1$ for all other non-negative $r\le14$.
\item[Single-parameter fixed points.]
There is just one single-parameter fixed point whenever $n\equiv2$ (mod $4$), excluding $n=2$, and none for any other value of $n$.
\item[Symmetric and almost-symmetric cycles.]

Referring to the notation used in Subsection \ref{enumsasc}, we have in base $6$ that $B=5$ and $C=2$; both $B$ and $C$ are prime, with $\sigma(B)=C$, so formulae in Subsection \ref{enumsasc} can be used without any additional computations. From (\ref{ngamt}) and (\ref{ngamma}) we obtain that
\beq\label{ngam6}
N_\gamma(2,\omega)=\left\lfloor\frac{\omega-1}{2}\right\rfloor.
\eeq
The summation in (\ref{neta}) then needs to be evaluated separately for the cases of even and odd $\eta$, but the results from the two cases can be combined neatly in the formula,
\beq\label{seta}
N_{k\gamma}(2,\eta)=\left\lfloor\frac{(\eta-2)^2}{4}\right\rfloor.
\eeq
The number of symmetric cycles with digit-count $n$ is then
\beq\label{nsc6count}
N_{SC}=N_{k\gamma}\left(2,\frac{n}{2}\right)=\left\lfloor\frac{(n-4)^2}{16}\right\rfloor
\eeq
for even $n$ (and $N_{SC}=0$ for odd $n$). The numbers of almost-symmetric cycles with odd and even $n$ are obtained respectively from (\ref{nacodd}) and (\ref{naceven}) as
\beq\label{nacodd6count}
N_{AC}=\sum_{m=1}^{\lfloor(n-3)/10\rfloor}\left\lfloor\frac{(n-10m+1)^2}{16}\right\rfloor
\eeq
and
\beq\label{naceven6count}
N_{AC}=\sum_{m=1}^{\lfloor(n-8)/10\rfloor}\left\lfloor\frac{(n-10m-4)^2}{16}\right\rfloor.
\eeq
As with the symmetric and almost-symmetric fixed points, closed-form expressions for the above sums depend on congruence of $n$ modulo $20$, but we can reduce the amount of calculation needed by using equation (\ref{ascevenodd}), which takes the form
\beq\label{nansac6}
N_{AC}(n+10)=N_{AC}(n+5)=N_{SAC}(n)\quad\text{for even $n$}
\eeq
in base $6$. Evaluating (\ref{nsc6count}) and (\ref{naceven6count}) and adding, we find that all even values of $n$ are covered by the formula,
\beq\label{nsac6}
N_{SAC}=\left\lfloor\frac{n(n-4)(n+7)}{480}+\frac{1}{5}\right\rfloor\quad\text{for even $n$},
\eeq
from which $N_A$ may be found using (\ref{nansac6}) in every case where $N_A$ is non-zero. Results of a numerical evaluation of this formula for $8\le n\le40$ are tabulated below (note that $N_{SAC}=0$ for $n<8$).

\begin{center}
\begin{tabular}{|c|c||c|c||c|c||c|c||c|c|}
\hline
$n$&$N_{SAC}$&$n$&$N_{SAC}$&$n$&$N_{SAC}$&$n$&$N_{SAC}$&$n$&$N_{SAC}$\\
\hline
8&1&16&9&24&31&32&73&40&141\\
10&2&18&13&26&39&34&87&&\\
12&4&20&18&28&49&36&103&&\\
14&6&22&24&30&60&38&121&&\\
\hline
\end{tabular}
\end{center}

\item[Non-symmetric $\sigma$-cycles.] From the specification of these cycles in (\ref{nssigma6}), $k_1\ge0$ and \mbox{$k_2\ge2$}; so, since $0+2=1+1$, the number of pairs of integers satisfying these requirements and adding to a given $\omega$ is the same as when we require $k_1\ge1$ and $k_2\ge1$ (as in the case of symmetric cycles). Now, $n=2(k_1+k_2)+2$ from (\ref{nssigma6}), so $\omega:=k_1+k_2=(n-2)/2$. Hence the number of non-symmetric $\sigma$-cycles is given by (\ref{ngam6}) as
$$N_{N\sigma}=N_\gamma\left(2,\frac{n-2}{2}\right)=\left\lfloor\frac{n-4}{4}\right\rfloor$$
for even $n$, and zero for odd $n$.
\item[Single-parameter cycles]
There is one single-parameter cycle for each odd $n\ge7$.
\item[Special cycles]
There is one special cycle for each of the digit-counts $n=2,4,5,6,8$.
\end{description}
Finally, we note that the following fixed points and cycles are unanimous: the triad fixed point with $n=3$, the special cycles with $n=2$ and $n=4$, and the single-parameter cycle with $n=7$.

\section{Fixed points and cycles in base 8}\label{sec4}

For base $8$, sequence \seqnum{A165099} in OEIS gives fixed points and least members of cycles (from which the remaining members of cycles may be computed), with sequence \seqnum{A165100} giving the lengths of cycles. From these data, we have classified the fixed points and cycles as follows. We have not proved rigorously that this classification is complete.

\subsection{List of fixed points and cycles}\label{lfpc8}

\begin{description}
\item[Symmetric fixed points.] For every $k_0,k_1\in\nn$, the integer 
$$\underbrace{7\cdots7}_{k_0}\;\underbrace{5\cdots5}_{k_1}\;\underbrace{3\cdots3}_{k_1}\;\underbrace{1\cdots1}_{k_1-1}0\underbrace{6\cdots6}_{k_1}\;\underbrace{4\cdots4}_{k_1}\;\underbrace{2\cdots2}_{k_1}\;\underbrace{0\cdots0}_{k_0-1}1,$$
with Kaprekar index 
$$(k_0,k_1,k_1,k_1,k_1,k_1,k_1,k_0),$$
is a fixed point of the Kaprekar transformation, with digit-count $n=2k_0+6k_1$, so symmetric fixed points exist with even digit-counts $n\ge8$.
\item[Almost-symmetric fixed points.] Given $k_0,k_1,k_7$ with $1\le k_0<k_7<k_0+k_1$ and \mbox{$\alpha=k_7-k_0$}, the integer
\begin{align*}&\underbrace{7\cdots7}_{k_0}\;\underbrace{6\cdots6}_{\alpha}\;\underbrace{5\cdots5}_{k_1-\alpha}\;\underbrace{4\cdots4}_{\alpha}\underbrace{3\cdots3}_{k_1-\alpha}\;\underbrace{2\cdots2}_{\alpha}\;\underbrace{1\cdots1}_{k_1-\alpha-1}\;0\;\underbrace{7\cdots7}_{\alpha}\;\underbrace{6\cdots6}_{k_1-\alpha}\;\underbrace{5\cdots5}_{\alpha}\;\underbrace{4\cdots4}_{k_1-\alpha}\\
&\qquad\qquad\qquad\qquad\qquad\qquad\qquad\qquad\qquad\qquad\qquad\qquad\underbrace{3\cdots3}_{\alpha}\;\underbrace{2\cdots2}_{k_1-\alpha}\;\underbrace{1\cdots1}_{\alpha}\;\underbrace{0\cdots0}_{k_0-1}\;1,
\end{align*}
with Kaprekar index
$$(k_0,k_1,k_1,k_1,k_1,k_1,k_1,k_7),$$
is an almost-symmetric fixed point with digit-count $n=k_0+6k_1+k_7$. Since the conditions require $k_0\ge1$, $k_7\ge2$, and $k_1\ge2$, the minimum digit-count for an almost-symmetric fixed point is $n=15$.
\item[Uniform zero-free fixed points.] For each $t\in\nn$, the integer
$$\underbrace{6\cdots6}_{t}\;\underbrace{4\cdots4}_{t}\;\underbrace{2\cdots2}_{t-1}\;1\underbrace{7\cdots7}_{t}\;\underbrace{5\cdots5}_{t}\;\underbrace{3\cdots3}_{t}\;\underbrace{1\cdots1}_{t-1}2,$$
with Kaprekar index 
$$(0,t,t,t,t,t,t,t),$$
is a fixed point of the Kaprekar transformation, with digit-count $n=7t$.
\item[Triad fixed points.] For each $t\in\nn$, the integer
$$\underbrace{4\cdots4}_{t-1}\;3\;\underbrace{7\cdots7}_{t}\;\underbrace{3\cdots3}_{t-1}\;4,$$
with Kaprekar index $(0,0,0,t,t,0,0,t)$, is a fixed point with digit-count $n=3t$.
\item[Other zero-free fixed points.]  There are two further classes of zero-free fixed points in base $8$.
\begin{itemize}
\item[(a)] For each $t\in\nn$ and $u\in\nn$ with $u<t$, the integer
$$\underbrace{6\cdots6}_{t}\;\underbrace{4\cdots4}_{u}\;\underbrace{3\cdots3}_{t-u}\;\underbrace{2\cdots2}_{u-1}\;1\underbrace{7\cdots7}_{t}\;\underbrace{5\cdots5}_{u}\;\underbrace{4\cdots4}_{t-u}\;\underbrace{3\cdots3}_{u}\;\underbrace{1\cdots1}_{t-1}2,$$
with Kaprekar index 
$$(0,t,u,t,t,u,t,t),$$
is a fixed point of the Kaprekar transformation, with digit-count $n=5t+2u$. As in base $6$, allowing $u=t$ in the definition of these fixed points would yield the uniform zero-free fixed points. 
\item[(b)] For each $t\in\nn$ and $\epsilon\in\nn$ with $\epsilon<t$, the integer
$$\underbrace{6\cdots6}_{t-\epsilon}\;\underbrace{5\cdots5}_{\epsilon}\;\underbrace{4\cdots4}_{t-\epsilon}\;\underbrace{3\cdots3}_{\epsilon}\;\underbrace{2\cdots2}_{t-1}\;1\underbrace{7\cdots7}_{t}\;\underbrace{5\cdots5}_{t}\;\underbrace{4\cdots4}_{\epsilon}\;\underbrace{3\cdots3}_{t-\epsilon}\;\underbrace{2\cdots2}_{\epsilon}\;\underbrace{1\cdots1}_{t-\epsilon-1}2,$$
with Kaprekar index 
$$(0,t-\epsilon,t+\epsilon,t,t,t+\epsilon,t-\epsilon,t),$$
is a fixed point of the Kaprekar transformation, with digit-count $n=7t$. Allowing $\epsilon=0$ in the definition of these fixed points would yield the uniform zero-free fixed points.
\end{itemize}
\item[Single-parameter fixed points.] For each $t\in\nn$, the integer
$$6\underbrace{5\cdots5}_{t-1}\;4\underbrace{3\cdots3}_{t-1}\;\underbrace{1\cdots1}_{t-1}\;0\underbrace{6\cdots6}_{t}\;\underbrace{4\cdots4}_{t-1}\;3\underbrace{2\cdots2}_{t},$$
with Kaprekar index 
\beq\label{b8spfp}
(1,t-1,t,t,t,t-1,t+1,0),
\eeq
is a fixed point of the Kaprekar transformation, with digit-count $n=6t$. These have some similarities to the single-parameter fixed points in base $6$, in particular having $k_0=1$ and $k_B=0$, but not sufficient similarities to indicate a general class of single-parameter fixed points for all even bases $b\ge6$.
\item[Special fixed points.]
There is just one special fixed point in base $8$: for $n=2$, the integer $25$, with Kaprekar index $(0,0,1,0,0,1,0,0)$, is a fixed point.
\end{description}
For cycles, we display only the Kaprekar indices of cycle members; the integers represented by these Kaprekar indices can be obtained by the usual subtraction process (\ref{defkt}). 
\begin{description}
\item[Symmetric and almost-symmetric cycles.] Since $\sigma(7)=3$, these cycles have length \mbox{$l=3$}. Given $k_0,k_7,t,u,v\in\nn$ with $t,u,v$ not all equal, $v=\min\{t,u,v\}$ and either $k_0=k_7$ or $k_0< k_7<k_0+v$, cycles with digit-count $n=k_0+k_7+2(t+u+v)$ have the sequence of Kaprekar indices,
\begin{align*}
(k_0,t,u,v,v,u,t,k_7)\mapsto(k_0,v,t,u,u,t,v,k_7)\mapsto&(k_0,u,v,t,t,v,u,k_7)\mapsto\\
&\qquad(k_0,t,u,v,v,u,t,k_7).
\end{align*}
The condition that $t,u,v$ are not all equal, with $v=\min\{t,u,v\}$, implies that $t+u+v\ge3v+1$. Thus for symmetric cycles, with $k_0\ge1,k_7\ge1$, and $v\ge1$, we have that $n\ge10$. For almost-symmetric cycles, with $k_0\ge1,k_7\ge2$, and $v\ge2$, we have that $n\ge17$.
\item[Non-symmetric $\sigma$-cycles.] For each $t,u,v\in\nn$ with $t+1,u,v$ not all equal, there is a cycle of length \mbox{$l=\sigma(7)=3$} and digit-count $n=2(t+u+v)+2$, with the sequence of Kaprekar indices,
\begin{align}
&(1,t,u,v,v,u-1,t+2,0)\mapsto(1,v-1,t+1,u,u,t,v+1,0)\mapsto\nonumber\\
&\qquad(1,u-1,v,t+1,t+1,v-1,u+1,0)\mapsto(1,t,u,v,v,u-1,t+2,0);\label{nssigma8}
\end{align}
so the rules of form (\ref{kdel}) for this cycle are:
$$k_2'=k_1+1,k_3'=k_2,k_1'=k_3-1,k_6'=k_4+1,k_4'=k_5+1,k_5'=k_6-2.$$
In the case where $t+1=u=v$, we have the single-parameter fixed points with Kaprekar index of form (\ref{b8spfp}) (after a change of notation, $t\to t-1$).
\item[Single-parameter cycles] There are several of these in base $8$, with a variety of lengths. In each case listed below, the cycle exists for every $t\in\nn$.

(i) With digit-count $n=7t+4$, cycles of length $l=2$:
\begin{align*}
(1,t,t+1,t,t,t,t+2,t)\mapsto&(0,t+1,t,t+1,t+1,t+1,t-1,t+1)\mapsto\\
&\qquad(1,t,t+1,t,t,t,t+2,t).
\end{align*}
(ii) With digit-count $n=5t+7$, cycles of length $l=2$:
\begin{align*}
(1,0,2t+2,0,0,2t+1,2,t+1)\mapsto&(0,1,t+1,t+1,t+2,t,0,t+2)\mapsto\\
&\qquad(1,0,2t+2,0,0,2t+1,2,t+1).
\end{align*}
(iii) With digit-count $n=2t+2$, cycles of length $l=3$:
\begin{align*}
(1,t-1,0,1,0,1,t,0)\mapsto&(0,1,t,0,0,t,1,0)\mapsto\\
&\qquad(0,0,1,t,t,1,0,0)\mapsto(1,t-1,0,1,0,1,t,0).
\end{align*}
(iv) With digit-count $n=7t+6$, cycles of length $l=5$:
\begin{align*}
&(1,t-1,t+2,t+1,t+1,t+1,t+1,t)\mapsto(1,t,t,t+2,t+2,t,t,t+1)\mapsto\\
&\qquad(1,t+2,t,t,t,t,t+2,t+1)\mapsto(0,t+2,t+1,t,t,t+2,t,t+1)\mapsto\\
&\qquad(0,t+1,t+1,t+1,t+1,t+1,t+1,t)\mapsto\\
&\qquad(1,t-1,t+2,t+1,t+1,t+1,t+1,t).
\end{align*}
(v) With digit-count $n=7t+10$, cycles of length $l=5$:
\begin{align*}
&(1,t+1,t+2,t+1,t+1,t+1,t+3,t)\mapsto(1,t,t+2,t+2,t+2,t+2,t,t+1)\mapsto\\
&\qquad(1,t+2,t,t+2,t+2,t,t+2,t+1)\mapsto(1,t+2,t+2,t,t,t+2,t+2,t+1)\mapsto\\
&\qquad(0,t+2,t+1,t+2,t+2,t+2,t,t+1)\mapsto\\
&\qquad(1,t+1,t+2,t+1,t+1,t+1,t+3,t).
\end{align*}
(vi) With digit-count $n=7t+4$, cycles of length $l=6$:
\begin{align*}
&(1,t-1,t+1,t+1,t+1,t,t+1,t)\mapsto(1,t,t,t+1,t+1,t,t,t+1)\mapsto\\
&\qquad(1,t+1,t,t,t,t,t+1,t+1)\mapsto(0,t+2,t,t,t,t+1,t,t+1)\mapsto\\
&\qquad(0,t+1,t+1,t,t,t+1,t+1,t)\mapsto(0,t,t+1,t+1,t+1,t+1,t,t)\mapsto\\
&\qquad(1,t-1,t+1,t+1,t+1,t,t+1,t).
\end{align*}
(vii) With digit-count $n=7t+6$, cycles of length $l=9$:
\begin{align*}
&(1,t+1,t+1,t,t,t,t+3,t)\mapsto(0,t+1,t+1,t+1,t+1,t+2,t-1,t+1)\mapsto\\
&\qquad(1,t,t+1,t+1,t+1,t,t+2,t)\mapsto(1,t,t+1,t+1,t+1,t+1,t,t+1)\mapsto\\
&\qquad(1,t+1,t,t+1,t+1,t,t+1,t+1)\mapsto(1,t+1,t+1,t,t,t+1,t+1,t+1)\mapsto\\
&\qquad(0,t+2,t,t+1,t+1,t+1,t,t+1)\mapsto(1,t,t+2,t,t,t+1,t+2,t)\mapsto\\
&\qquad(0,t+1,t,t+2,t+2,t+1,t-1,t+1)\mapsto(1,t+1,t+1,t,t,t,t+3,t).
\end{align*}
(viii) With digit-count $n=7t+8$, cycles of length $l=12$:
\begin{align*}
&(1,t+1,t+1,t+1,t+1,t,t+3,t)\mapsto(1,t,t+2,t+1,t+1,t+2,t,t+1)\mapsto\\
&\qquad(1,t+1,t,t+2,t+2,t,t+1,t+1)\mapsto(1,t+2,t+1,t,t,t+1,t+2,t+1)\mapsto\\
&\qquad(0,t+2,t+1,t+1,t+1,t+2,t,t+1)\mapsto(1,t,t+2,t+1,t+1,t+1,t+2,t)\mapsto\\
&\qquad(1,t,t+1,t+2,t+2,t+1,t,t+1)\mapsto(1,t+2,t,t+1,t+1,t,t+2,t+1)\mapsto\\
&\qquad(1,t+1,t+2,t,t,t+2,t+1,t+1)\mapsto(0,t+2,t,t+2,t+2,t+1,t,t+1)\mapsto\\
&\qquad(1,t+1,t+2,t,t,t+1,t+3,t)\mapsto(0,t+1,t+1,t+2,t+2,t+2,t-1,t+1)\mapsto\\
&\qquad(1,t+1,t+1,t+1,t+1,t,t+3,t).
\end{align*}
\item[Special cycles.] The following special cycles exist in base $8$. 
\begin{itemize}
\item With $n=4$, a cycle of length $l=5$:
\begin{align*}
&\quad(0,1,0,0,0,0,1,2)\mapsto(1,0,1,0,0,0,2,0)\mapsto(0,0,1,2,0,0,1,0)\mapsto\\
&\qquad(0,0,0,1,1,0,0,2)\mapsto(0,0,1,0,3,0,0,0)\mapsto(0,1,0,0,0,0,1,2).
\end{align*}
\item With $n=5$, a cycle of length $l=2$:
$$\quad(0,0,1,0,3,0,0,1)\mapsto(0,0,0,1,1,0,0,3)\mapsto(0,0,1,0,3,0,0,1).$$
\item With $n=5$, a cycle of length $l=4$:
\begin{align*}
&\quad(0,1,0,1,0,2,0,1)\mapsto(0,1,1,0,0,1,1,1)\mapsto(0,0,1,2,0,0,1,1)\mapsto\\
&\qquad(0,0,1,1,1,1,0,1)\mapsto(0,1,0,1,0,2,0,1).
\end{align*}
\item With $n=7$, a cycle of length $l=4$:
\begin{align*}
&\quad(1,0,1,1,1,0,2,1)\mapsto(0,1,1,1,2,0,0,2)\mapsto(1,0,2,0,0,1,2,1)\mapsto\\
&\qquad(0,1,0,3,1,0,0,2)\mapsto(1,0,1,1,1,0,2,1).
\end{align*}
\item With $n=7$, a cycle of length $l=7$:
\begin{align*}
&\quad(1,0,0,2,1,1,1,1)\mapsto(0,2,0,1,1,1,0,2)\mapsto(0,2,1,0,0,1,2,1)\mapsto\\
&\qquad(0,0,2,2,0,1,1,1)\mapsto(0,1,0,2,1,2,0,1)\mapsto(0,1,2,0,0,2,1,1)\mapsto\\
&\qquad(0,0,2,1,2,0,1,1)\mapsto(1,0,0,2,1,1,1,1).
\end{align*}
\item With $n=9$, a cycle of length $l=4$:
\begin{align*}
&\quad(1,1,2,0,0,1,3,1)\mapsto(0,1,1,3,1,1,0,2)\mapsto(1,0,3,0,0,2,2,1)\mapsto\\
&\qquad(0,1,1,2,3,0,0,2)\mapsto(1,1,2,0,0,1,3,1).
\end{align*}
\item With $n=9$, a cycle of length $l=5$:
\begin{align*}
&\quad(1,0,0,3,2,1,1,1)\mapsto(1,1,1,1,1,1,1,2)\mapsto(0,3,0,1,1,1,1,2)\mapsto\\
&\qquad(0,2,2,0,0,2,2,1)\mapsto(0,0,3,1,2,1,1,1)\mapsto(1,0,0,3,2,1,1,1).
\end{align*}
\item With $n=9$, a cycle of length $l=5$:
\begin{align*}
&\quad(1,0,1,2,2,0,2,1)\mapsto(1,1,0,2,2,0,1,2)\mapsto(1,2,0,1,1,0,2,2)\mapsto\\
&\qquad(0,2,2,0,1,1,1,2)\mapsto(0,1,2,1,2,0,2,1)\mapsto(1,0,1,2,2,0,2,1).
\end{align*}
\item With $n=11$, a cycle of length $l=4$:
\begin{align*}
&\quad(1,0,3,1,1,2,2,1)\mapsto(0,2,0,3,3,1,0,2)\mapsto(1,2,2,0,0,1,4,1)\mapsto\\
&\qquad(0,1,2,3,1,2,0,2)\mapsto(1,0,3,1,1,2,2,1).
\end{align*}
\item With $n=12$, a cycle of length $l=4$:
\begin{align*}
&\quad(1,0,2,2,2,1,2,2)\mapsto(0,2,1,2,2,2,0,3)\mapsto(0,2,3,0,0,3,2,2)\mapsto\\
&\qquad(0,1,2,2,3,0,2,2)\mapsto(1,0,2,2,2,1,2,2).
\end{align*}
\end{itemize}
Only the special fixed point with $n=2$ and the triad fixed point with $n=3$ are unanimous.
\end{description}

\subsection{Enumeration of fixed points and cycles}

\begin{description}
\item[Symmetric and almost-symmetric fixed points.] In base $b=8$, the formula (\ref{ensfp}) for the number of symmetric fixed points for any even $n$ becomes
\beq\label{nscount8}
N_S=\left\lfloor\frac{n-2}{6}\right\rfloor
\eeq 
(and $N_S=0$ for odd $n$).

The formulae (\ref{ascountogen}) and (\ref{ascountegen}) for the numbers of almost-symmetric fixed points with odd and even digit-counts become
\beq\label{ascounto8}
N_A=\sum_{m=1}^{\lfloor(n-1)/14\rfloor}\left\lfloor\frac{n-14m+5}{6}\right\rfloor\quad\text{for odd $n$}
\eeq
and
\beq\label{ascounte8}
N_A=\sum_{m=1}^{\lfloor(n-8)/14\rfloor}\left\lfloor\frac{n-14m-2}{6}\right\rfloor\quad\text{for even $n$}.
\eeq
Closed-form expressions for these sums depend on congruence of $n$ modulo $42$, but by evaluating (\ref{nscount8}) and (\ref{ascounte8}) and adding, we find that the total number of symmetric and almost-symmetric fixed points is given by the following formula for all even $n$:
\beq\label{nsa8}
N_{SA}:=N_S+N_A=\left\lfloor\frac{n(n+6)}{168}+\frac{1}{3}\right\rfloor.
\eeq
The relations (\ref{asevenodd}) and (\ref{sasevenodd}) become
\beq\label{nansa8}
N_A(n+14)=N_A(n+7)=N_{SA}(n)\quad\text{for even $n$}.
\eeq
Thus it is only necessary to find $N_{SA}$ for even $n$ in order to obtain $N_A$ and $N_{SA}$ for all even and odd $n$ (noting that $N_A=0$ for odd $n\le13$ and even $n\le20$). Values of $N_{SA}$ for even digit-counts $8\le n\le50$, evaluated from formula (\ref{nsa8}), are tabulated below (note that $N_A=N_S=0$ for $n<8$).

\begin{center}
\begin{tabular}{|c|c||c|c||c|c||c|c||c|c|}
\hline
$n$&$N_{SA}$&$n$&$N_{SA}$&$n$&$N_{SA}$&$n$&$N_{SA}$&$n$&$N_{SA}$\\
\hline
8&1&18&2&28&6&38&10&48&15\\
10&1&20&3&30&6&40&11&50&17\\
12&1&22&4&32&7&42&12&&\\
14&2&24&4&34&8&44&13&&\\
16&2&26&5&36&9&46&14&&\\
\hline
\end{tabular}
\end{center}

\item[Zero-free fixed points.] There is one triad fixed point for any digit-count $n$ that is divisible by $3$.

We can combine the enumeration of uniform zero-free fixed points with class (a) of other zero-free fixed points: together, these fixed points exist for any pair of integers $(t,u)$ with $1\le u\le t$ and $5t+2u=n$. Equivalently, each of these fixed points corresponds to a pair of integers $(t,v)$ with $t\ge0$, $v\ge1$, and $5t+7v=n$. Hence the number of zero-free fixed points when $n\equiv r$ (mod $35$) is
$$N_Z=\left\lfloor\frac{n}{35}\right\rfloor+q,$$
where $q=0$ for $r=0,1,2,3,4,5,6,8,9,10,11,13,15,16,18,20,23,25,30$, and $q=1$ for all other positive $r\le34$.

The other zero-free fixed points of class (b) occur when $n=7t$ for any $t>1$, with the number of such fixed points being $n/7-1$.
\item[Other fixed points.] There is one single-parameter fixed point for any digit-count $n$ that is divisible by $6$. The only special fixed point occurs with $n=2$.
\item[Symmetric and almost-symmetric cycles.]

In base $8$ we have $B=7$ and $C=3$ in the notation used in Subsection \ref{enumsasc}; as in base $6$, both $B$ and $C$ are prime, with $\sigma(B)=C$, so formulae in Subsection \ref{enumsasc} can be used without any additional computations. From (\ref{ngamt}) and (\ref{ngamma}) ,
\beq\label{ngam8}
N_\gamma(3,\omega)=\left\lfloor\frac{(\omega-1)(\omega-2)}{6}\right\rfloor.
\eeq
Evaluating the summation in (\ref{neta}) for the cases $\eta\equiv\{0,1,2\}$ (mod $3$), we find that the formula
\beq\label{seta8}
N_{k\gamma}(3,\eta)=\left\lfloor\frac{\eta(\eta-3)^2}{18}\right\rfloor
\eeq
applies in all cases. The number of symmetric cycles with digit-count $n$ is then
$$N_{SC}=N_{k\gamma}\left(3,\frac{n}{2}\right)=\left\lfloor\frac{n(n-6)^2}{144}\right\rfloor$$
for even $n$ (and $N_{SC}=0$ for odd $n$). The numbers of almost-symmetric cycles with odd and even $n$ are obtained respectively from (\ref{nacodd}) and (\ref{naceven}) as
$$N_{AC}=\sum_{m=1}^{\lfloor(n-3)/14\rfloor}\left\lfloor\frac{(n-14m+7)(n-14m+1)^2}{144}\right\rfloor$$
and
$$N_{AC}=\sum_{m=1}^{\lfloor(n-10)/14\rfloor}\left\lfloor\frac{(n-14m)(n-14m-6)^2}{144}\right\rfloor.$$
As with the symmetric and almost-symmetric fixed points, closed-form expressions for the above sums depend on congruence of $n$ modulo $42$. Noting equation (\ref{ascevenodd}), which in base $8$ takes the form
\beq\label{nansac8}
N_{AC}(n+14)=N_{AC}(n+7)=N_{SAC}(n)\quad\text{for even $n$},
\eeq
we have evaluated $N_{SAC}=N_{SC}+N_{AC}$ and we find that the following formula applies in all cases:
$$N_{SAC}=\left\lfloor\frac{n(n+6)(n^2+6n-104)}{8064}+\frac{3}{7}\right\rfloor\quad\text{for even $n$}.$$
Results of a numerical evaluation of this formula for $10\le n\le50$ are tabulated below.

\begin{center}
\begin{tabular}{|c|c||c|c||c|c||c|c||c|c|}
\hline
$n$&$N_{SAC}$&$n$&$N_{SAC}$&$n$&$N_{SAC}$&$n$&$N_{SAC}$&$n$&$N_{SAC}$\\
\hline
10&1&20&27&30&131&40&396&50&936\\
12&3&22&39&32&168&42&478&&\\
14&6&24&55&34&212&44&572&&\\
16&11&26&75&36&264&46&679&&\\
18&18&28&100&38&325&48&800&&\\
\hline
\end{tabular}
\end{center}

The value of $N_A$ then follows from (\ref{nansac8}) in every case where $N_A$ is non-zero.
\item[Non-symmetric $\sigma$-cycles.] From the specification of these cycles in (\ref{nssigma8}), $k_1\ge0$, $k_2\ge1$, and \mbox{$k_3\ge1$}; so, the number of triples of integers satisfying these requirements and adding to a given $\omega$ is equal to $\widetilde{N_\gamma}(3;\omega+1)$, where the specification of $\widetilde{N_\gamma}$ requires the integers in the triple to each be strictly positive. Now, $n=2(k_1+k_2+k_3)+2$ from (\ref{nssigma8}), so $\omega:=k_1+k_2+k_3=(n-2)/2$. Hence the number of non-symmetric $\sigma$-cycles is given by (\ref{ngam8}) as
$$N_{N\sigma}=N_\gamma\left(3,\frac{n}{2}\right)=\left\lfloor\frac{(n-2)(n-4)}{24}\right\rfloor$$
for even $n$, and zero for odd $n$.
\item[Single-parameter cycles.] From the list of these cycles in Subsection \ref{lfpc8}, they occur as follows.
\begin{itemize}
\item One cycle when $n\equiv0$ (mod $2$) with $n\ge4$;
\item  One cycle when $n\equiv2$ (mod $5$) with $n\ge12$;
\item  One cycle when $n\equiv1$ (mod $7$) with $n\ge15$;
\item  One cycle when $n\equiv3$ (mod $7$) with $n\ge17$;
\item  Two cycles when $n\equiv4$ (mod $7$) with $n\ge11$;
\item  Two cycles when $n\equiv6$ (mod $7$) with $n\ge13$.
\end{itemize}
\item[Special cycles] These occur as follows. One at $n=4$; two at $n=5$; two at $n=7$; three at $n=9$; one at $n=11$; one at $n=12$.

\end{description}

\section{Conclusions}\label{sec6}

Based on data provided by Joseph Myers in the OEIS for bases $4$, $6$ and $8$, we have developed a classification of the fixed points and cycles of the Kaprekar transformation in even bases. The symmetric and almost-symmetric fixed points and cycles, and the uniform zero-free and triad fixed points, must exist in every even base $b\ge4$. However, there also exist other classes of fixed points and cycles, which become increasingly prominent in higher bases; so we cannot be certain that our classification is complete. We have provided general formulae and methods for enumerating [almost-]symmetric fixed points and cycles, but the complications of multiple ``$i$-cycles'' mean that further work will be needed for such enumerations in most even bases $b>8$. The symmetric and almost-symmetric fixed points and cycles are far more numerous than any other class. In contrast to our findings in odd bases, special cycles which do not fit within our classification are rather rare in even bases, and exist only for low digit-counts. Nevertheless, they do become more numerous in higher bases.

Since our interest is in cycles, it is not surprising that cyclic groups should play an important role in the theory. However, it does seem remarkable that the subgroup generated by $2$ in the multiplicative group modulo some $r$ plays such a pivotal role in both the regular cycles in odd bases \cite{kdw1} and the [almost-]symmetric cycles in even bases, yet in completely different ways. In the former it determines the length of cycles according to their digit-count, whereas in the latter it determines the length according to the base. Hence it appears that cycles of arbitrary length may exist in any odd base, whereas there may be a maximum cycle length in any given even base; but these conjectures remain unproved.

\section{Acknowledgments}

This work was begun by KD-W as a final-year undergraduate project under the supervision of AK at Loughborough University, and continued by AK after his retirement. The authors thank Ben Crossley for useful discussions. The work could not have been done without the computational results made freely available by Joseph Myers in the OEIS.

\bigskip
\hrule
\bigskip

\noindent 2020 {\it Mathematics Subject Classification}:
11A99.

\noindent \emph{Keywords: } 
Kaprekar transformation, cycle, fixed point.

\end{document}